\let\origsection=\section
\def\section{\@ifstar{\origsection*}{\mysection}}
\def\mysection{\@startsection{section}{1}\z@{.7\linespacing\@plus\linespacing}{.5\linespacing}{\normalfont\scshape\centering\S}}
\renewcommand{\PrintDOI}[1]{\doi{#1}}
\def\eps{\varepsilon}
\def\KE{\mathop{\text{\rm KE}}\nolimits}
\def\int{\mathop{\text{\rm int}}\nolimits}
\def\bip{\mathop{\text{\rm bip}}\nolimits}
\let\polishlcross=\l
\def\l{\ifmmode\ell\else\polishlcross\fi}
\let\emptyset=\varnothing
\let\setminus=\smallsetminus
\def\moverlay{\mathpalette\mov@rlay}
\def\mov@rlay#1#2{\leavevmode\vtop{%
   \baselineskip\z@skip \lineskiplimit-\maxdimen
   \ialign{\hfil$\m@th#1##$\hfil\cr#2\crcr}}}
\newcommand{\charfusion}[3][\mathord]{
    #1{\ifx#1\mathop\vphantom{#2}\fi
        \mathpalette\mov@rlay{#2\cr#3}
      }
    \ifx#1\mathop\expandafter\displaylimits\fi}
\newtheorem{theorem}{Theorem}
\newtheorem{lemma}[theorem]{Lemma}
\newtheorem{definition}[theorem]{Definition}
\newtheorem{fact}[theorem]{Fact}
\begin{document}

\title[]{Three-colour bipartite Ramsey number for\\ graphs with small bandwidth}

\author[]{G.~O.~Mota}
\address{Centro de Matem\'atica, Computa\c c\~ao e Cogni\c c\~ao, Universidade Federal do ABC, Santo Andr\'e, Brazil}
\email{g.mota@ufabc.edu.br}


\begin{abstract}
We estimate the $3$-colour bipartite Ramsey number for balanced bipartite graphs~$H$ with small bandwidth and bounded maximum degree.
More precisely, we show that the minimum value of $N$ such that in any $3$-edge colouring of $K_{N,N}$ there is a monochromatic copy of $H$ is at most $\big(3/2+o(1)\big)|V(H)|$.
In particular, we determine asymptotically the $3$-colour bipartite Ramsey number for balanced grid graphs.\\
\end{abstract}

\keywords{Bipartite Ramsey, Bandwidth, Regularity Lemma}

\maketitle 

\section{Introduction}

Given graphs $H_1, \ldots , H_r$, the \emph{Ramsey number} $R(H_1, \ldots, H_r)$ is the smallest integer $N$ such that  any complete graph $K_N$  with edges coloured with colours $1,\ldots,r$ contains a copy of some $H_i$ ($1\leq i\leq r$) where all edges of $H_i$ have colour $i$.
The existence of Ramsey numbers follows from Ramsey's Theorem~\cite{Ra30}.

There are many results about Ramsey numbers $R(H_1, \ldots, H_r)$ for particular families of graphs $\{H_1, \ldots, H_r\}$ (see, e.g.,~\cite{CoFoSu15,GrRoSp90,Ra94}).
However, determining Ramsey numbers (even asymptotically) seems to be a difficult problem.
For two colours, it was proved by Gerencs\'er and Gy\'arf\'as~\cite{GeGy67} that $R(P_n, P_n) = \left\lfloor (3n-2)/2 \right\rfloor$, where $P_n$ is the path with $n\geq 2$ vertices.
The Ramsey number $R(T,T)$ for general trees $T$ with some restriction on the degree was asymptotically determined in~\cite{HaLuTi02}.
For cycles $C_n$ the Ramsey number $R(C_n,C_n)$ was determined, independently, in~\cite{FaSc74} and~\cite{Ro73}.

For three colours the situation is more complicated.
Figaj and {\L}uczak~\cite{FiLu07} determined $R(P_n,P_n,P_n)$ asymptotically and 
Gyárfás, Ruszinkó, Sárk\"ozy and Szemerédi~\cite{GyRuSaSz07b} strengthen this result determining the $3$-colour Ramsey number for paths exactly. 
The $3$-colour Ramsey number for even cycles was first determined asymptotically by Figaj and {\L}uczak~\cite{FiLu07}.
Later, it was determined exactly by Be\-ne\-vi\-des and Skokan~\cite{BeSk09} for sufficiently large even cycles.
{\L}uczak~\cite{Lu99} determined the $3$-colour Ramsey number for odd cycles asymptotically and Kohayakawa, Simonovits and Skokan~\cite{KoSiSk05} found the exact value of it for long odd cycles.


We say a graph $H=(V_H,E_H)$ has {\em bandwidth} at most $b$ it there is a labelling $v_1,\ldots, v_n$ of the vertices of $V_H$ such that $|i-j|\leq b$ for every edge $v_iv_j\in E_H$.
The $3$-colour Ramsey number for balanced bipartite graphs $H$ with small bandwidth and bounded maximum degree (see Definition~\ref{def:betadelta} below) was estimated in~\cite{MoSaScTa15}.
Such estimation determines asymptotically the $3$-colour Ramsey number for grid graphs.

\begin{definition}\label{def:betadelta}
A bipartite graph $H=(V_H,E_H)$ is a \emph{$(\beta,\Delta)$-graph} if it has bandwidth at most $\beta |V_H|$ and $\Delta(H)\leq \Delta$. 
Furthermore, $H$ is a \emph{balanced $(\beta,\Delta)$-graph} if there is a proper $2$-colouring $\chi\colon V_H
\to [2]$ such that ${\big||\chi^{-1}(1)|-|\chi^{-1}(2)|\big| \leq \beta |\chi^{-1}(2)|}$.
\end{definition}

We remark that, for any fixed $\beta>0$, sufficiently large planar graphs with maximum degree at most 
$\Delta$ are $(\beta, \Delta)$-graphs (see~\cite{BoPrTaWu10}) and it is easy to show that sufficiently large grid graphs with $n$ vertices are $(\beta, 4)$-graphs (see, e.g.,~\cite{MoSaScTa15}).
We are interested in estimating, for balanced bipartite graphs with small bandwidth and bounded degree, the following generalization of Ramsey numbers that concerns edge-colourings of bipartite complete graphs:
given graphs $H_1, \ldots , H_r$, the \emph{bipartite Ramsey number} $R^{\bip}(H_1, \ldots, H_r)$ is the smallest integer $N$ such that  any $K_{N,N}$  with edges coloured with colours $1,\ldots,r$ contains a copy of some $H_i$ ($1\leq i\leq r$) where all edges of $H_i$ have colour~$i$.

For two colours, independently, Gyarfás and Lehel~\cite{GyLe73} and Faudree and Schelp~\cite{FaSc75} determined the exact bipartite Ramsey number for paths, showing that $R^{\bip}(P_n,P_n)=n$ for odd $n$, and  $R^{\bip}(P_n,P_n)=n-1$ for even $n$.
In~\cite{ZhSu11}, Zhang and Sun proved that $R^{\bip}(C_{2n},C_4)=n+1$, and in~\cite{ZhSuWu13} Zhang, Sun and Wu proved that $R^{\bip}(C_{2n},C_6)=n+2$ for $n\geq 4$.
The methods developed by Buci\'c, Letzter and Sudakov in~\cite{BuLeSu18+} proves that~$R^{\bip}(C_{2n},C_{2m})=\big(1+o(1)\big)(n+m)$ for all positive $n$ and $m$, determining asymptotically the bipartite Ramsey number of every pair of cycles.
For complete bipartite graphs, the best known bounds for $R^{\bip}(K_{n,n},K_{n,n})$ differ exponentially~(see~\cite{Co08,HaHe98}).
More estimates for bipartite Ramsey numbers can be seen in~\cite{BeSc76,LiTaZa05,Th82}.

Buci\'c, Letzter and Sudakov proved in~\cite{BuLeSu18+} that $R^{\bip}(C_{n},C_{n},C_{n})=\big(3/2 + o(1)\big)n$ for even $n$, which implies that $R^{\bip}(P_{n},P_n,P_n)=\big(3/2 + o(1)\big)n$.
To see that this result is asymptotically best possible, consider the following $3$-edge colouring of $K_{N,N}$ with $N=3(n/2 - 1)$: divide one of the classes of $K_{N,N}$ into three parts, $V_1$, $V_2$, and $V_3$, each of size $n/2-1$, and give colour $i$ to every edge incident to vertices in $V_i$, for $i\in[3]$.
Let $n$ be even and let $H$ be an $n$-vertex bipartite graph with $n/2$ vertices in each class of the bipartition (this includes $P_n$ and $C_n$).
Clearly, the given edge colouring of $K_{N,N}$ contains no monochromatic copy of $H$.
This implies that 
\begin{equation}\label{lower}
R^{\bip}(H,H,H)\geq \frac{3n}{2}-2.
\end{equation}
In particular,~\eqref{lower} implies that $R^{\bip}(C_{n},C_{n},C_{n})$ and $R^{\bip}(P_{n},P_n,P_n)$ are at least $3n/2-2$ for even~$n$.

Using a result proved in~\cite{BuLeSu18+} together with the methods applied in~\cite{MoSaScTa15}, we prove that the $3$-colour bipartite Ramsey number of $n$-vertex balanced bipartite graphs with small bandwidth and bounded maximum degree is at most $(3/2+o(1))n$.
The next theorem is our main result.

\begin{theorem}\label{thm:threecolour}
For every $\gamma>0$ and every natural number $\Delta$, there exist a
constant $\beta>0$ and natural number $n_0$ such that for every
balanced $(\beta,\Delta)$-graph $H$ on $n\geq n_0$ vertices we
have
$$
R^{\bip}(H,H,H) \leq (3/2 + \gamma) n.
$$
\end{theorem}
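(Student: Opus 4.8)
The plan is to follow the now-standard regularity-blow-up embedding strategy, transferring a monochromatic connected-matching result from the reduced graph to an embedding of $H$ itself. First I would apply the multicolour Szemer\'edi Regularity Lemma to the $3$-edge-coloured $K_{N,N}$, where $N=(3/2+\gamma)n$, to obtain a partition of each vertex class into a bounded number of clusters so that all but a negligible fraction of the cluster pairs are $\eps$-regular in each of the three colours. I would then form a reduced (weighted) bipartite graph on the clusters, majority-colouring each regular pair by its densest colour, thereby inheriting a $3$-edge-colouring of a slightly smaller complete bipartite graph on the clusters. The cited result of Buci\'c, Letzter and Sudakov~\cite{BuLeSu18+} (that $R^{\bip}(C_m,C_m,C_m)=(3/2+o(1))m$ for even $m$, equivalently the path statement) is what I would invoke at the reduced-graph level: since the number of clusters per side slightly exceeds $3/2$ times the target, the reduced graph must contain a long monochromatic path, and hence—after a standard argument—a large \emph{monochromatic connected matching}, i.e. a matching whose edges all lie in one colour class and inside a single connected monochromatic component of the reduced graph.

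The second stage is the embedding. Having a monochromatic connected matching $M$ of size linear in the number of clusters, I would use the bandwidth hypothesis on $H$ to embed $H$ into the blow-up of $M$. The key structural point is that a balanced $(\beta,\Delta)$-graph, by virtue of its bandwidth labelling and balanced proper $2$-colouring, can be partitioned into consecutive blocks of bounded size that respect the bipartition; consecutive blocks are assigned to the super-clusters corresponding to consecutive matching edges along the connected component, and the connectivity of the component supplies the regular pairs needed to route edges of $H$ that cross from one matching edge to the next. Here I would invoke the bipartite version of the Blow-up Lemma (or an embedding lemma for bounded-degree graphs into systems of super-regular pairs), after first cleaning up the regular pairs to super-regular ones by deleting a few vertices of low degree. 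The balanced condition $\big||\chi^{-1}(1)|-|\chi^{-1}(2)|\big|\le\beta|\chi^{-1}(2)|$ is exactly what guarantees the two colour classes of $H$ can be matched to the two sides of the bipartite clusters without overflowing either side.

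The parameters must be chosen in the usual cascade: given $\gamma$ and $\Delta$, I would first fix the constants coming from the Blow-up Lemma, then the regularity parameters $\eps$ and the bound on the number of clusters, then finally $\beta$ small enough (and $n_0$ large enough) that the bandwidth of $H$ is dwarfed by the cluster size, so that each block of $H$ fits inside a single super-cluster with room to spare. The $o(1)$ slack in the Buci\'c--Letzter--Sudakov bound is absorbed into $\gamma$ by taking the number of clusters large.

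The main obstacle I anticipate is the faithful transfer of the \emph{connected}-matching structure through the regularity machinery so that the bandwidth embedding actually goes through: it is not enough to have a large monochromatic matching in the reduced graph, because the blocks of $H$ must be threaded through matching edges that are genuinely linked by regular pairs of the \emph{same} colour, and one must verify that the connected monochromatic component provides a walk long enough to accommodate all consecutive blocks while keeping the load on each cluster balanced between the two sides. Managing the balance between the two parts of the bipartition inside each super-cluster—so that neither side is exhausted prematurely—while simultaneously honouring the maximum-degree and bandwidth constraints is the delicate bookkeeping that the proof will hinge on; this is precisely where the machinery of~\cite{MoSaScTa15} is adapted to the bipartite setting.
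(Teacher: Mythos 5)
Your overall architecture (regularity lemma, majority-coloured reduced graph, monochromatic connected matching, then a bandwidth/blow-up embedding) is the same as the paper's, but the first stage as you state it has a genuine gap. You propose to invoke the Buci\'c--Letzter--Sudakov theorem $R^{\bip}(C_m,C_m,C_m)=(3/2+o(1))m$ \emph{at the reduced-graph level} to extract a long monochromatic path. That theorem is a statement about $3$-colourings of the \emph{complete} bipartite graph, whereas the reduced graph is only almost complete: after the Regularity Lemma, each cluster can fail to be regular (in all three colours simultaneously) with up to an $\eps$-fraction of the clusters on the other side, so those pairs are simply absent from the reduced graph. A Ramsey number for complete host graphs does not transfer to host graphs with deleted edges; completing the reduced graph with arbitrarily coloured fake edges does not help, because the monochromatic path you obtain may use those fake edges, and deleting them destroys both the path and, crucially, the \emph{connectivity} needed for a connected matching. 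In fact the logical dependence runs the other way: in \cite{BuLeSu18+} the robust statement --- a large monochromatic connected matching in any $3$-coloured \emph{almost-complete} bipartite graph (quoted in this paper as Lemma~\ref{lemma:Matching}) --- is the key lemma from which the cycle Ramsey number is deduced via regularity. The paper's proof of Lemma~\ref{lemma:FindTheNiceTree} applies exactly that robust lemma to the reduced graph; your proposal cites the right source but the wrong (weaker, non-robust) result from it, and the step ``the reduced graph must contain a long monochromatic path'' would fail as written.

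A secondary point, more a matter of missing mechanism than a wrong step: for the embedding you need not just a monochromatic connected matching but the structure the paper packages in Definition~\ref{def:shape} --- a monochromatic tree $T$ containing the matching, regular in the matching colour on \emph{all} of its edges, with the matched endpoints $x_1,\dots,x_\ell$ pairwise at \emph{even} distance in $T$. The even-distance (parity) condition is what guarantees that, as the link vertices of $H$ walk along paths of $T$ between matching edges, the two colour classes of $H$'s proper $2$-colouring always land on the correct sides of the super-regular pairs. Your proposal acknowledges the balance bookkeeping as a difficulty but offers no device for it; without the parity control the routing of consecutive blocks of $H$ through the connected component can fail.
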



We remark that Theorem~\ref{thm:threecolour} applies to grid graphs and bipartite planar graphs of bounded degree, where a grid graph $G_{a,b}$ is the graph with vertex set ${V=[a]\times[b]}$ such that there is an edge between two vertices if they are equal in one coordinate and consecutive in the other.
In particular, together with the lower bound~\eqref{lower}, Theorem~\ref{thm:threecolour} determines asymptotically the bipartite Ramsey number of balanced grid graphs, i.e., $R^{\bip}(G_{a,a},G_{a,a},G_{a,a}) = (3/2 + o(1)) a^2$.

In Section~\ref{sec:aux} we present some auxiliary results necessary to prove Theorem~\ref{thm:threecolour}, which is proved in details in Section~\ref{sec:main}.

\section{Auxiliary results}\label{sec:aux}

In this section we present some results that will be used to prove Theorem~\ref{thm:threecolour}.
First, in Section~\ref{subsec:balBlocks}, we state a result (see Lemma~\ref{lemma:balancedVectors} below) that will allow us to order any partition $\mathcal{W}=W_1,\ldots,W_{\hat\ell}$ (with classes having almost the same size) of a balanced bipartite graph $H$ in a way that the subgraph induced by any large contiguous set of classes in this order is also balanced.
In Section~\ref{subsec:reg} we discuss the regularity method, where we state the version of the regularity and embedding lemmas we need in our proof.
Finally, we prove a result (Lemma~\ref{lemma:FindTheNiceTree}) ensuring that, for sufficiently large $N$, any $3$-edge colouring of $K_{N,N}$ contains a large monochromatic subgraph that is some regular blow-up of a tree containing a matching (see Definition~\ref{def:shape} below).

\subsection{Local balancedness}\label{subsec:balBlocks}

Let $H=(V_H,E_H)$ be a graph with $V_H=\{v_1,\ldots,v_n\}$ and consider a $2$-colouring ${\chi_H\colon V_H\to[2]}$.
Given a subset of vertices $W\subset V_H$, denote by $C_1$ and $C_2$, respectively, the number of vertices with colour $1$ and $2$ under $\chi_H$.
We say $\chi_H$ is a $\beta$\emph{-balanced} colouring of $V_H$~if 
$$
1-\beta\leq \frac{C_1(W)}{C_2(W)}\leq 1+\beta.
$$
Given positive integers $\hat\ell$ and $1\leq a<b\leq \hat\ell$, consider a partition $\mathcal{W}=\{W_1,\ldots, W_{\hat\ell}\}$ of $V_H$, and a permutation $\sigma\colon[\hat\ell]\to[\hat\ell]$, we define $C_i(\mathcal{W},\sigma,a,b)=\sum_{j=a}^{b} C_i(W_{\sigma(j)})$ for $i=1,2$, i.e., $C_i(\mathcal{W},\sigma,a,b)$ is the number of vertices in $W_{\sigma(a)}\cup W_{\sigma(a+1)}\cup\dots\cup W_{\sigma(b)}$ with colour $i$.
If it is clear what partition $\mathcal{W}$ we are considering then we write simply $C_i(\sigma,a,b)$.

Given a graph $H=(V_H,E_H)$, let $\chi_H\colon V_H\to[2]$ be a colouring of $V_H$ such that $H$ is globally 
balanced. Roughly speaking, the next lemma states that, given a partition $\mathcal{W}$ of~$V_H$ into parts of almost the same size, the parts of $\mathcal{W}$ can be ordered in a way that, after such ordering, in every (not so small) contiguous set of parts, the difference between the number of vertices $w$ with $\chi_H(w)=1$ and those $w$ with $\chi_H(w)=2$ is small.

\begin{lemma}[{\cite[Lemma 2.11]{MoSaScTa15}}]\label{lemma:balancedVectors}
For every $\xi>0$ and every integer $\hat \ell\geq 1$ there exists $n_0$ such that if $H=(V_H,E_H)$ is a graph with 
$n\geq n_0$, then for every \text{$\beta$-balanced} $2$-colouring $\chi_H$ of $V_H$ with
$\beta\leq 2/\hat\ell$, and every partition of $V_H$ into parts $W_1,\ldots,W_{\hat\ell}$ with $|W_1|\leq\ldots 
\leq|W_{\hat\ell}|\leq |W_1|+1$ there 
exists a permutation
$\sigma\colon[\hat\ell]\to[\hat\ell]$ such that for every pair of integers $1\leq a<b\leq \hat\ell$ with $b-a\geq 
7/\xi$, we have
\begin{equation*}
|C_1(\sigma,a,b)-{C_2(\sigma,a,b)}|\leq \xi {C_2(\sigma,a,b)},
\end{equation*}
\end{lemma}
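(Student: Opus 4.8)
The plan is to reduce the statement to a one-dimensional balancing problem for the signed colour imbalances of the parts. For each $j\in[\hat\ell]$ put $d_j:=C_1(W_j)-C_2(W_j)$, so that $|d_j|\le|W_j|\le B$, where $B:=\lceil n/\hat\ell\rceil\le n/\hat\ell+1$ is the largest part size. Two features of the numbers $d_j$ drive everything. First, each is of the order of a single part: $|d_j|\le B$. Second, their sum $D:=\sum_{j=1}^{\hat\ell}d_j=C_1(V_H)-C_2(V_H)$ is small: assuming after a possible swap of colours that $C_1\ge C_2$, global $\beta$-balancedness gives $C_1\le(1+\beta)C_2$, hence $C_2\ge n/(2+\beta)$ and $D=n-2C_2\le \beta n/(2+\beta)\le n/\hat\ell\le B$, using $\beta\le 2/\hat\ell$. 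Finally, the quantity to be controlled for a window $[a,b]$ is, writing $P_k:=\sum_{j=1}^k d_{\sigma(j)}$, the prefix-sum difference $C_1(\sigma,a,b)-C_2(\sigma,a,b)=\sum_{j=a}^b d_{\sigma(j)}=P_b-P_{a-1}$.

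The crux is to order the parts so that all prefix sums lie in a short interval. I would prove the following balancing fact: since $|d_j|\le B$ and $|D|\le B$, there is a permutation $\sigma$ for which $P_0,\dots,P_{\hat\ell}$ all lie in $[-B,B]$. The ordering is greedy: scan the parts maintaining the running sum $S$, and always append a part whose imbalance has the sign opposite to that of $S$ (either sign when $S=0$), resorting to parts of the remaining sign only after the other sign is exhausted. While both signs are still available, the invariant $-B\le S\le B$ survives each step by a one-line check (appending a negative number to a nonnegative $S\le B$, or a positive number to a negative $S\ge-B$, keeps $S$ in $[-B,B]$). Once one sign is used up, the remaining same-sign steps move $S$ monotonically to its final value $D\in[0,B]$, so $S$ never leaves $[-B,B]$. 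Consequently $|P_b-P_{a-1}|\le 2B\le 2(n/\hat\ell+1)$ for every window.

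It remains to turn this absolute bound into the required relative one. Fix $1\le a<b\le\hat\ell$ with $b-a\ge 7/\xi$; the window then has $m:=b-a+1\ge 7/\xi+1$ parts and total size $T:=\sum_{j=a}^b|W_{\sigma(j)}|\ge m\lfloor n/\hat\ell\rfloor$. From $C_1(\sigma,a,b)+C_2(\sigma,a,b)=T$ and $C_1(\sigma,a,b)-C_2(\sigma,a,b)=P_b-P_{a-1}$ we get $C_2(\sigma,a,b)=\tfrac12\big(T-(P_b-P_{a-1})\big)\ge\tfrac12(T-2B)$, and hence
\[
\frac{|C_1(\sigma,a,b)-C_2(\sigma,a,b)|}{C_2(\sigma,a,b)}
\le \frac{2\cdot 2B}{T-2B}
\le \frac{4}{m-2}\,(1+o(1))
\le \frac{4\xi}{7-\xi}\,(1+o(1)),
\]
where the middle step uses $T\ge m\lfloor n/\hat\ell\rfloor$ and $B\le n/\hat\ell+1$ with $n_0$ large, and the last uses $m-2\ge 7/\xi-1$. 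For $\xi\le 3$ the right-hand side is at most $\xi$, as required; the larger values of $\xi$, which do not occur in the applications, follow from the same scheme using the sharper ordering bound $|P_b-P_{a-1}|\le\max\{B,|D|\}=B$. Here $n_0$ is chosen large in terms of $\xi$ and $\hat\ell$ so that comparing $\lfloor n/\hat\ell\rfloor$ with $n/\hat\ell$ costs only the displayed $(1+o(1))$.

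I expect the balancing fact of the second paragraph to be the only genuine difficulty. Its content is that the prefix sums can be confined to a window of width of order $n/\hat\ell$; a cruder bound of order $|D|+B$ would already be too weak to survive division by $C_2$ against the threshold $b-a\ge 7/\xi$. This confinement is exactly where global balancedness enters, through the inequality $|D|\le B$ that keeps the monotone tail of the greedy inside $[-B,B]$. The remaining ingredients — the estimates on $B$ and $D$, the identity $C_2(\sigma,a,b)=\tfrac12\big(T-(P_b-P_{a-1})\big)$, and the choice of $n_0$ absorbing the rounding — are routine.
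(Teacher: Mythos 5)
The paper itself does not prove this lemma --- it is quoted from [MoSaScTa15] --- so your proposal has to stand on its own; judged that way, it has a real gap in the range $\xi\ge 3$.

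Your core argument is sound where it applies: the reduction to prefix sums, the bound $|D|\le B$ from global $\beta$-balancedness with $\beta\le 2/\hat\ell$, the greedy confinement of all prefix sums to $[-B,B]$ (hence all window sums to $2B$), and the resulting estimate $4\xi/(7-\xi)\cdot(1+o(1))\le\xi$ together prove the statement for $\xi<3$ (with some extra care exactly at $\xi=3$, where $4\xi/(7-\xi)=\xi$ and the $(1+o(1))$ factor must be absorbed, e.g.\ by integrality of $b-a$). But the lemma asserts the conclusion for \emph{every} $\xi>0$, and your treatment of $\xi\ge 3$ rests on the claim that the ordering satisfies the sharper bound $|P_b-P_{a-1}|\le\max\{B,|D|\}=B$. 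That claim is false --- not just for the greedy ordering, but for every ordering. Take $\hat\ell=5$, all parts of size $B$, two parts entirely of colour $1$ (so $d_j=B$) and three parts with $d_j=-2B/3$ (say $C_1=B/6$, $C_2=5B/6$); then $D=0$, so the colouring is $\beta$-balanced for every $\beta$. In any ordering, either the two positive parts are adjacent (window sum $2B$), or they are separated by exactly one negative part (the $3$-part window has sum $4B/3$), or some two negative parts are adjacent (window sum $-4B/3$), or all three negatives are consecutive (window sum $-2B$). In every case some window has absolute sum at least $4B/3>B$.

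Moreover, the gap is not cosmetic, because for $\xi\ge 7$ the conclusion constrains every window of two consecutive parts, and there the absolute bound $2B$ says nothing (the window's total size is only about $2B$). Indeed your greedy, as specified, can output a violating order: with $\hat\ell=10$, two parts all of colour $1$, one part all of colour $2$, and seven parts with $d_j=-B/7$, the run that takes the all-colour-$2$ part first ($S=-B$), then one all-colour-$1$ part ($S=0$), then --- legally, since at $S=0$ either sign is allowed --- the second all-colour-$1$ part, places the two all-colour-$1$ parts adjacently; that window has $C_2=0<|C_1-C_2|$, violating the conclusion for every $\xi$, and it is a constrained window since $b-a=1\ge 7/\xi$. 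So for $\xi\ge 3$ one needs an additional idea, e.g.\ an ordering rule that also enforces interleaving of surplus and deficit parts at the scale of $O(1)$ consecutive parts, so that windows of $2$ and $3$ parts are themselves nearly balanced. (In the application in this paper one may assume $\gamma$, and hence $\xi=\gamma/6$, is small, so your regime would suffice there; but it does not prove the lemma as stated.)
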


\subsection{Regularity and embedding lemmas}\label{subsec:reg}

Given an $n$-vertex graph $G=(V_G,E_G)$ and non-empty disjoint subsets $A$, $B\subset V_G$, we denote by
$e_G(A,B)$ the number of edges of $G$ with one endpoint in $A$ and
the other in $B$, and $d_G(A,B)=e_G(A,B)/(|A||B|)$ is the \emph{density} of $G$ between $A$ and $B$. 
The pair $(A,B)$ is called $\varepsilon$\emph{-regular} if
for all $X\subset A$ and $Y\subset B$ with $|X|\geq\varepsilon|A|$ and $|Y|\geq\varepsilon|B|$ we have
\begin{equation*}
|d_G(X,Y)-d_G(A,B)|<\varepsilon. 
\end{equation*}

The proof of Theorem~\ref{thm:threecolour} is based on the following $3$-colour version
of the Regularity Lemma for bipartite graphs (see~\cite{Sz75} for the original version of the Regularity Lemma).

\begin{lemma}[Regularity Lemma]\label{lemma:Regularity}
For every $\varepsilon>0$ and every integer $k_0>0$ there exists
a positive integer $K_0(\varepsilon, k_0)$ such that for $n\geq K_0$ the
following holds. For all bipartite graphs $G_1$, $G_2$ and $G_3$ with
the same bipartition $(L,R)$ such that $|L|=|R|=n$, there is a partition of $L\cup R$
into $2k+1$ classes $V_0,V_1,V_2,\dots,V_{2k}$ such that
\begin{itemize}
\item [(i)] $k_0\leq 2k\leq K_0$,
\item [(ii)] $|V_1|=|V_2|=\dots=|V_{2k}|$,
\item [(iii)] $|V_0\cap L|=|V_0\cap R|<\varepsilon n$,
\item [(iv)] every $V_i$ is contained in either $L$ or $R$, for every $1\leq i\leq 2k$,
\item [(v)] for every $V_i$ with $1\leq i\leq k$, apart from at most $\varepsilon (2k)$ classes $V_j$, the pair $(V_i,V_j)$ is $\varepsilon$-regular in $G_1$, $G_2$ and $G_3$.
\end{itemize}
\end{lemma}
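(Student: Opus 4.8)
The plan is to adapt Szemer\'edi's energy-increment proof of the original Regularity Lemma~\cite{Sz75} to the three-colour bipartite setting, taking as the starting partition the bipartition $\{L,R\}$ itself and keeping every subsequent refinement inside $L$ and inside $R$. Throughout I would measure progress by the combined index
\begin{equation*}
q(\mathcal{P})=\sum_{c=1}^{3}\sum_{V_i,V_j\in\mathcal{P}}\frac{|V_i|\,|V_j|}{(2n)^2}\,d_{G_c}(V_i,V_j)^2,
\end{equation*}
which is bounded above by $3$ since each density lies in $[0,1]$. Because every class of every partition I produce is contained in a single side of the bipartition, condition~(iv) holds automatically, and two classes lying inside the same side contribute nothing (they carry no edges of any $G_c$). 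The first step is to run the usual argument: as long as more than $\varepsilon'(2k)^2$ of the pairs fail to be $\varepsilon'$-regular in at least one of $G_1,G_2,G_3$, I refine using the witness subsets of the irregular pairs, which raises $q$ by at least some $\delta(\varepsilon')>0$. Since the refining sets are subsets of existing classes, the new classes still respect $\{L,R\}$, and the boundedness of $q$ forces termination after a number of steps depending only on $\varepsilon'$. This yields an equitable partition refining $\{L,R\}$, with a bounded number of parts and with all but at most $\varepsilon'(2k)^2$ pairs simultaneously $\varepsilon'$-regular for the three graphs; here $\varepsilon'\le\min\{\varepsilon,\varepsilon^2\}/C$ will be chosen small relative to $\varepsilon$, and the lower bound $k_0$ on the number of parts is enforced by first subdividing each side into sufficiently many equal pieces.

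Next I would convert the global count of irregular pairs into the per-class statement~(v) and fix the balance demanded by~(ii) and~(iii). Index the non-exceptional classes so that $V_1,\dots,V_k\subset L$ and $V_{k+1},\dots,V_{2k}\subset R$; since every edge of every $G_c$ joins $L$ to $R$, controlling the irregular partners of each class in $L$ controls all relevant pairs, which is exactly what~(v) asks for $1\le i\le k$. Calling a class \emph{bad} if it is $\varepsilon'$-irregular (in some colour) with more than $\sqrt{\varepsilon'}\,(2k)$ other classes, a simple averaging argument bounds the number of bad classes by $O(\sqrt{\varepsilon'}\,(2k))$; moving all of them into the exceptional set leaves every surviving class with at most $\sqrt{\varepsilon'}\,(2k)\le\varepsilon(2k)$ irregular partners, and since $\varepsilon'\le\varepsilon$ every surviving regular pair is in particular $\varepsilon$-regular. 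Finally I would equalise the two sides: if $a$ and $b$ count the surviving classes inside $L$ and $R$, I move $|a-b|$ classes from the larger side into $V_0$, after which both sides carry the same number $k$ of classes of the common size $m$. Because $|L|=|R|=n$, this immediately gives $|V_0\cap L|=|V_0\cap R|=n-km$, which is~(iii), while~(ii) holds by construction.

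All of this amounts to choosing $\varepsilon'$ small enough, and $n$ large enough, that the several rounds of moving classes into $V_0$ never exhaust the budget in~(iii): each round removes $O(\sqrt{\varepsilon'}\,(2k))$ classes, each of size $m\le 2n/(2k)$, so the total mass pushed into $V_0$ is $O(\sqrt{\varepsilon'}\,n)$, which is below $\varepsilon n$ once $\varepsilon'\le(\varepsilon/C)^2$; the same smallness keeps $2k$ above $k_0$ and below the bound produced by the increment argument, giving~(i) with $K_0$ equal to that bound. The main obstacle, and the only part needing real care, is carrying out the cleanup and the rebalancing \emph{together} so that the three requirements interact correctly: each time I discard classes to enforce the per-class regularity of~(v) I may disturb the equality of the two sides, and each time I rebalance for~(iii) I add classes to $V_0$ that must still be charged against the $\varepsilon n$ budget, all while the common size $m$ and the count $2k$ stay within the prescribed ranges. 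Pinning down a single choice of $\varepsilon'$ (and of the intermediate regularity parameter fed to the increment argument) that simultaneously honours~(i)--(v) is the delicate bookkeeping step; the regularity mechanism itself is the standard one.
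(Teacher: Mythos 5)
The paper contains no proof of this lemma: it is stated as a standard three-colour bipartite variant of Szemer\'edi's Regularity Lemma, with only a citation to the original one-graph version, and is then used as a black box in the proof of Lemma~\ref{lemma:FindTheNiceTree}. So your proposal cannot be compared to a proof in the paper; what you have written is the standard energy-increment argument adapted to this setting, and as a sketch it is sound. The three observations that make the adaptation work are all present and correct: refining always inside $L$ and inside $R$ makes condition (iv) automatic; two classes on the same side carry no edges of any $G_c$, hence contribute nothing to the index and are trivially regular, so the increment argument run simultaneously for the three colours (with combined index bounded by $3$) goes through unchanged; and the per-class form of (v) together with the balance conditions (ii)--(iii) follow from a final averaging cleanup and a rebalancing step whose total cost is $O(\sqrt{\varepsilon'}\,n)$ and so fits inside the $\varepsilon n$ budget. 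One point you should make explicit when writing this up: after discarding bad classes and rebalancing, the quantity $2k$ appearing in condition (v) is the \emph{new} number of classes, so you need $\sqrt{\varepsilon'}\,(2k_{\mathrm{old}})\le \varepsilon\,(2k_{\mathrm{new}})$; this is fine under your choice $\varepsilon'\le(\varepsilon/C)^2$, since only an $O(\sqrt{\varepsilon'})$-fraction of the classes is discarded, but it is exactly the kind of interaction your last paragraph flags and should not be left implicit. Modulo that and the acknowledged standard bookkeeping (equitabilization in each round, the tower-type bound giving $K_0$), your argument is a valid self-contained proof; the paper's choice to cite rather than prove reflects only that this variant is folklore.
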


Before stating the embedding results that we need, let us give a few more definitions.
A bipartite graph $G=(A,B;E_G)$ is $(\varepsilon,d)$\emph{-regular} if it is
$\varepsilon$-regular and $d_G(A,B)\geq d$. 
Furthermore, $G$ is called $(\varepsilon,d)$\emph{-super-regular} if it is $\eps$-regular and ${\deg_G(a)>d|B|}$ for 
all $a\in A$ and ${\deg_G(b)>d|A|}$ for all $b\in B$.
For a graph $G=(V_G,E_G)$, a partition $\{V_1,\ldots,V_s\}$ of $V_G$ is $(\varepsilon,d)$\emph{-regular} (resp. \emph{super-regular}) \emph{on a graph} $R=(V_R,E_R)$ with vertex set $[s]$ if, for every edge $ij\in E_R$, the
bipartite subgraph of $G$ induced by the pair $\{V_i,V_j\}$ is $(\varepsilon,d)$-regular (resp. super-regular).
The graph $R$ is the \emph{reduced graph} of the partition $\{V_1,\ldots,V_s\}$ (or of the graph $G$). 
We refer the reader to~\cite{KoSi96,KoShSiSz02} for surveys devoted to the Regularity Lemma and its applications.

We will make use of the following two simple facts that can be easily proved using the definitions of regular and super-regular pairs.

\begin{fact}\label{fact:Slicing}
Let $G=(A,B;E_G)$ be an $\varepsilon$-regular bipartite graph and let $A'\subset A$ and ${B'\subset B}$ with 
$|A'|\geq \alpha |A|$ and 
$|B'|\geq \alpha |B|$ for some $\alpha>\varepsilon$. Then the graph ${G'=\big(A',B';E_G(A',B')\big)}$ is 
$\varepsilon'$-regular such that
${|d_G(A,B)-d_{G'}(A',B')|<\varepsilon}$, where $\varepsilon'=\max\{\varepsilon/\alpha,2\varepsilon\}$.
\end{fact}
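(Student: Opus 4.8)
The plan is to derive both conclusions directly from the definition of $\varepsilon$-regularity applied to the \emph{original} pair $(A,B)$, after observing that any subset which is large relative to $A'$ or $B'$ is automatically large relative to $A$ or $B$. Concretely, since $\varepsilon'\geq\varepsilon/\alpha$, a set $X\subseteq A'$ with $|X|\geq\varepsilon'|A'|$ satisfies $|X|\geq(\varepsilon/\alpha)\,\alpha|A|=\varepsilon|A|$, and likewise for subsets of $B'$; moreover $A'$ and $B'$ themselves qualify as test sets for $(A,B)$ because $\alpha>\varepsilon$. The other key point I would use repeatedly is that $G'$ retains exactly the edges of $G$ between $A'$ and $B'$, so $d_{G'}(X,Y)=d_G(X,Y)$ for every $X\subseteq A'$ and $Y\subseteq B'$; in particular the densities computed in $G'$ and in $G$ agree on all the sets we care about.

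For the density estimate, I would simply feed the pair $(A',B')$ into the $\varepsilon$-regularity of $(A,B)$. Since $|A'|\geq\alpha|A|>\varepsilon|A|$ and $|B'|\geq\alpha|B|>\varepsilon|B|$, regularity yields $|d_G(A',B')-d_G(A,B)|<\varepsilon$, and replacing $d_G(A',B')$ by $d_{G'}(A',B')$ gives the claimed bound $|d_G(A,B)-d_{G'}(A',B')|<\varepsilon$.

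For the regularity of $G'$, I would take arbitrary $X\subseteq A'$ and $Y\subseteq B'$ with $|X|\geq\varepsilon'|A'|$ and $|Y|\geq\varepsilon'|B'|$. By the threshold computation above, both $(X,Y)$ and $(A',B')$ are admissible test pairs for the $\varepsilon$-regularity of $(A,B)$, so $|d_G(X,Y)-d_G(A,B)|<\varepsilon$ and $|d_G(A',B')-d_G(A,B)|<\varepsilon$. A single triangle inequality then gives $|d_G(X,Y)-d_G(A',B')|<2\varepsilon\leq\varepsilon'$, and since these densities coincide with $d_{G'}(X,Y)$ and $d_{G'}(A',B')$, this is precisely the $\varepsilon'$-regularity estimate for $G'$.

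There is no real obstacle here beyond careful bookkeeping of the constants, and I expect that to be the only delicate point: the two terms in $\varepsilon'=\max\{\varepsilon/\alpha,2\varepsilon\}$ arise from the two distinct constraints, namely ensuring that the rescaled subset-size thresholds still clear $\varepsilon|A|$ and $\varepsilon|B|$ (which forces $\varepsilon'\geq\varepsilon/\alpha$), and absorbing the two applications of regularity through the triangle inequality (which forces $\varepsilon'\geq 2\varepsilon$). Taking the maximum guarantees both constraints simultaneously.
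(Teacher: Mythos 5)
Your proof is correct: the paper states this fact without proof (it merely remarks that it ``can be easily proved using the definitions''), and your argument is exactly the standard slicing argument the paper has in mind. Both the density bound (testing $(A',B')$ against the regularity of $(A,B)$) and the $\varepsilon'$-regularity (rescaling the thresholds via $\varepsilon'\geq\varepsilon/\alpha$ and combining two applications of regularity through the triangle inequality, which is where $2\varepsilon$ enters) are handled correctly.
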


Given a matching $E_M$ of a graph $G=(V_G,E_G)$, we say that a vertex $v\in V_G$ is a \emph{vertex of $E_M$} if there is an edge of $E_M$ that is incident to $v$.

\begin{fact}\label{fact:SuperSlicing}
Consider a graph $G=(V_G,E_G)$ with an $(\varepsilon,d)$-regular partition $\{V_1,\ldots,V_s\}$ of $V$ with $|V_i|=m$ for $1\leq i\leq s$.
Let $T$ be a graph on vertex set $[s]$ contained in the corresponding $(\varepsilon,d)$-reduced graph of $\{V_1,\ldots,V_s\}$ and let $E_M$ be a matching contained in~$T$. Then for each vertex $i$ of $E_M$, the associated set $V_i$ in $G$ contains a subset $V_i'$ of size $(1-\varepsilon r)m$ such that for every edge $ij$ of $E_M$ the bipartite graph $(V_i',V_j';E_G(V_i',V_j'))$ is $(\varepsilon/(1-\varepsilon r),d-(1+r)\varepsilon)$-super-regular.
\end{fact}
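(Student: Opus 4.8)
The plan is to obtain each $V_i'$ by deleting from $V_i$ the few vertices whose degree across the matching edge at $i$ is atypically small, and then to read off regularity of what survives from Fact~\ref{fact:Slicing} and super-regularity from a direct degree count. The starting point is the standard observation that in an $\varepsilon$-regular pair almost every vertex has close to the typical degree. Fix an edge $ij\in E_M$; since $ij$ lies in the $(\varepsilon,d)$-reduced graph, the pair $(V_i,V_j)$ is $\varepsilon$-regular with $d_G(V_i,V_j)\ge d$. Let $B_i\subseteq V_i$ be the set of vertices $a$ for which the number of neighbours of $a$ in $V_j$ is less than $(d-\varepsilon)m$. If $|B_i|\ge\varepsilon m$, then taking $X=B_i$ and $Y=V_j$ in the definition of $\varepsilon$-regularity gives $d_G(B_i,V_j)<d-\varepsilon\le d_G(V_i,V_j)-\varepsilon$, contradicting $\varepsilon$-regularity; hence $|B_i|<\varepsilon m$, and symmetrically the analogous bad set $B_j\subseteq V_j$ has $|B_j|<\varepsilon m$.

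Next I would build the sets $V_i'$. Because $E_M$ is a matching, each vertex $i$ of $E_M$ lies in exactly one edge, so there is a single pair $(V_i,V_j)$ imposing a low-degree condition on $V_i$, and the prescriptions coming from different matching edges never interfere; this is the only place where the matching hypothesis is used. I would set $V_i'$ to be any subset of $V_i\setminus B_i$ of size exactly $(1-\varepsilon r)m$, which is possible because $r\ge 1$ forces $|B_i|<\varepsilon m\le\varepsilon r m$. For a fixed edge $ij\in E_M$, Fact~\ref{fact:Slicing} applied with $\alpha=1-\varepsilon r$ then shows that the restricted pair $\big(V_i',V_j';E_G(V_i',V_j')\big)$ is $\varepsilon/(1-\varepsilon r)$-regular. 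It remains to verify the two minimum-degree inequalities: for $a\in V_i'\subseteq V_i\setminus B_i$ the vertex $a$ has at least $(d-\varepsilon)m$ neighbours in $V_j$, and passing from $V_j$ to $V_j'$ discards at most $|V_j\setminus V_j'|=\varepsilon r m$ of them, so $a$ has at least $(d-\varepsilon)m-\varepsilon r m=(d-(1+r)\varepsilon)m$ neighbours in $V_j'$. Since $|V_j'|=(1-\varepsilon r)m<m$, this exceeds $(d-(1+r)\varepsilon)|V_j'|$, and the same estimate holds with the roles of $i$ and $j$ interchanged. This is precisely the super-regularity degree condition with density parameter $d-(1+r)\varepsilon$.

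The only genuinely delicate point is the last bookkeeping step: trimming $V_j$ down to $V_j'$ lowers the degrees into it from the $V_i'$-side, so one must check that the degree surplus guaranteed by $a\notin B_i$ survives the deletion of up to $\varepsilon r m$ further vertices, and it is exactly this that forces the density to drop from $d-\varepsilon$ to $d-(1+r)\varepsilon$. Everything else is a direct substitution into the definitions of $\varepsilon$-regularity and super-regularity together with Fact~\ref{fact:Slicing}, so no further ideas are required.
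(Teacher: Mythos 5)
You should first know that the paper never proves this statement at all --- it is introduced as one of ``two simple facts that can be easily proved using the definitions of regular and super-regular pairs'' --- so your attempt can only be measured against the folklore argument the authors clearly have in mind. Your proposal is essentially that argument, and most of it is carried out correctly: for a matching edge $ij$, the set $B_i\subseteq V_i$ of vertices with fewer than $(d-\varepsilon)m$ neighbours in $V_j$ satisfies $|B_i|<\varepsilon m$ (your contradiction against $\varepsilon$-regularity is the standard and correct one); the matching hypothesis indeed guarantees that each $V_i$ receives exactly one such low-degree constraint; and the minimum-degree bookkeeping $(d-\varepsilon)m-\varepsilon r m=(d-(1+r)\varepsilon)m>(d-(1+r)\varepsilon)|V_j'|$ is right. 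Your reading of the parameter $r$ --- which the statement never defines --- as an arbitrary number at least $1$, with the trimmed set padded down to size exactly $(1-\varepsilon r)m$, is consistent with the literal conclusion, which concerns only matching edges; in \cite{MoSaScTa15}, from which this fact is taken, $r$ appears as the maximum degree of $T$.

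The one step that fails as written is the regularity claim. Fact~\ref{fact:Slicing} applied with $\alpha=1-\varepsilon r$ does \emph{not} show that $(V_i',V_j')$ is $\varepsilon/(1-\varepsilon r)$-regular; it shows $\varepsilon'$-regularity with $\varepsilon'=\max\{\varepsilon/(1-\varepsilon r),\,2\varepsilon\}$, and whenever $\varepsilon r<1/2$ (always, in any meaningful application; here $r=1$ and $\varepsilon<1/(24\cdot10^5)$) this maximum equals $2\varepsilon$, which is strictly larger than the parameter you claim. Nor can the stronger parameter be rescued by a sharper triangle inequality: deleting an $\varepsilon r$-fraction of atypically high-degree vertices from each side can shift the density of the pair by roughly $2\varepsilon\cdot\varepsilon r$, while the slack $\varepsilon/(1-\varepsilon r)-\varepsilon\approx\varepsilon\cdot\varepsilon r$ that the claimed parameter leaves for such a shift is only about half of that, so a threshold-size subset pair whose density deviates by nearly $\varepsilon$ in the opposite direction defeats $\varepsilon/(1-\varepsilon r)$-regularity. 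What your argument honestly proves is that $(V_i',V_j')$ is $\bigl(\max\{2\varepsilon,\varepsilon/(1-\varepsilon r)\},\,d-(1+r)\varepsilon\bigr)$-super-regular. This is almost certainly what the authors intend --- the only use of the fact in the paper is to extract $(2\varepsilon,1/3-\varepsilon)$-super-regular pairs $(A_i,B_i)$, for which your argument suffices --- but as a proof of the literal statement, your citation of Fact~\ref{fact:Slicing} asserts more than that fact delivers; you should either weaken the regularity parameter in the conclusion to $\max\{\varepsilon/(1-\varepsilon r),2\varepsilon\}$ or explicitly flag the stated parameter as an imprecision inherited from the statement itself.
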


In the proof of Theorem~\ref{thm:threecolour} we need to apply the so-called Blow-up Lemma~\cite{KoSaSz97} (see also \cite{KoSaSz98, RoRu99,RoRuTa99}) to embed a $(\beta,\Delta)$-graph $H$ into a monochromatic subgraph $G$ of $K_{N,N}$.
To be able to do such embedding, we need to show that $G$ and $H$ have ``compatible'' partitions, which we define below.

\begin{definition}\label{def:compatiblePartitions}
Let $H = (V_H, E_H)$ be a graph. Let $T = ([s], E_T)$ be a tree and $M = ([s], E_{M})$ be a subgraph of $T$ where $E_M$ is a matching.
Given a partition $\{W_1,\ldots,W_s\}$ of $V_H$, let $U_i$, for $i\in[s]$, be the set of vertices in $W_i$ with neighbours in some $W_j$ with $ij \in E_{T}\setminus E_{M}$.
Set $U= \bigcup U_i$ and $U'_i= N_H(U)\cap(W_i\setminus U)$.

We say that $\{W_1,\ldots,W_s\}$ is $(\varepsilon,T,M)$-compatible with a vertex partition $\{V_1,\ldots,V_s\}$ of
a graph $G=(V_G,E_G)$ if the
following four conditions hold.
\begin{itemize}
 \item[(i)]If $xy\in E_H$ with $x\in W_i$ and $y\in W_j$, then $ij\in E_T$ for all $i, j \in [s]$,
 \item[(ii)]$|W_i|\leq |V_i|$ for all $i\in [s]$,
 \item[(iii)]$|U_i|\leq \varepsilon |V_i|$ for all $i\in [s]$,
 \item[(iv)]$|U'_i|,|U'_j|\leq \varepsilon \min\{|V_i|,|V_j|\}$ for all $ij\in E_M$.
\end{itemize}
\end{definition}


Considering the setup of Definition~\ref{def:compatiblePartitions}, roughly speaking, the following corollary of the Blow-up Lemma states that bounded degree graphs $H$ can be embedded into $G$, whenever $H$ and $G$ admit compatible partitions and the partition of $G$ is sufficiently dense and regular on $T$ and super-regular on $M$.

\begin{lemma}[Embedding Lemma \cite{Bo09,BoHeTa10}]\label{lemma:GeneralEmbedding}
For all positive $d, \Delta$ there is a positive constant $\varepsilon$ such that the following holds. Let $G = (V_G, E_G)$ be an $N$-vertex
graph that has a partition $\{V_1,\ldots,V_s\}$ of $V_G$ with $(\varepsilon, d)$-reduced graph $T$ on~$[s]$ which is
$(\varepsilon, d)$-super-regular on a graph $M \subset T$. Further, let $H = (V_H, E_H)$ be an $n$-vertex graph with 
maximum degree at most $\Delta$ and $n\leq N$ that has a vertex partition $\{W_1,\ldots,W_s\}$ of $V_H$ which is $(\varepsilon,T,M)$-compatible with $\{V_1,\ldots,V_s\}$. Then $H\subset G$.
\end{lemma}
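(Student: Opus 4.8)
The plan is to embed $H$ into $G$ in two phases that are dictated by the structure of $T$ and $M$. The first observation is that, because $T$ is a tree, condition~(i) of Definition~\ref{def:compatiblePartitions} forces each $W_i$ to be an independent set of $H$, so every edge of $H$ crosses some tree edge and is either a \emph{matching edge} (crossing some $ij\in E_M$) or a \emph{non-matching edge} (crossing some $ij\in E_T\setminus E_M$). By construction the set $U=\bigcup_i U_i$ consists precisely of the vertices incident to a non-matching edge. Consequently, once $U$ has been embedded, the remaining graph $H-U$ has all of its edges crossing matching edges of $M$; and since $M$ is a matching, these edges never join two distinct matching pairs: an $H$-edge between $W_i$ and $W_k$ with $i\neq k$ and $ij,kl\in E_M$ would cross the non-matching tree edge $ik$, placing both of its endpoints in $U$. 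Hence $H-U$ decomposes into pairwise non-interacting bipartite pieces, one for each edge of $M$, and each piece is to be embedded into the super-regular pair attached to that edge.

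The first step is to fix the constants. Given $d$ and $\Delta$, I would choose $\varepsilon$ small relative to $d$ and $\Delta$: small enough that the slicing facts (Fact~\ref{fact:Slicing} and Fact~\ref{fact:SuperSlicing}) still return $(\varepsilon',d')$-super-regular pairs with parameters admissible by the Blow-up Lemma~\cite{KoSaSz97}, and small enough that the common neighbourhood of up to $\Delta$ images across $\varepsilon$-regular pairs retains size at least roughly $(d-\varepsilon)^{\Delta}|V_i|$. Phase one is then a standard greedy embedding of $U$: processing the vertices of $U$ one at a time, when a vertex $u\in U_i$ is placed, its at most $\Delta$ previously embedded neighbours confine its image to the common neighbourhood of their images inside $V_i$, a set of size at least about $(d-\varepsilon)^{\Delta}|V_i|$ by regularity of the pairs along the tree edges at $i$. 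Since condition~(iii) gives $|U_i|\leq\varepsilon|V_i|$, the candidate sets never get exhausted, and the phase occupies at most $\varepsilon|V_i|$ vertices of each $V_i$ (exactly the images of $U_i$).

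Phase two applies the Blow-up Lemma independently to each matching edge. Fixing $ij\in E_M$, I would delete from $V_i$ and $V_j$ the at most $\varepsilon|V_i|$ and $\varepsilon|V_j|$ images used in phase one; by Fact~\ref{fact:SuperSlicing} the surviving pair $(V_i',V_j')$ is still super-regular with tolerable parameters, and condition~(ii) leaves enough room since $|W_i\setminus U_i|\leq|V_i'|$. The still-unembedded vertices on this pair are the piece of $H-U$ supported on $W_i\cup W_j$. Among them, the vertices of $U_i'$ and $U_j'$ already have neighbours embedded in $U$, so they carry image restrictions: the image of $w\in U_i'$ must lie in the common neighbourhood inside $V_i'$ of the images of its embedded neighbours, again a set of size at least about $(d-\varepsilon)^{\Delta}|V_i|$. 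Condition~(iv) bounds the number of such restricted vertices by $\varepsilon\min\{|V_i|,|V_j|\}$, which is exactly the regime in which the Blow-up Lemma permits prescribed target sets. Applying the Blow-up Lemma with these image restrictions embeds the piece into $(V_i',V_j')$, and running this over all edges of $M$ (together with trivially placing the isolated vertices and absorbing the size slack from $n\leq N$) completes the embedding $H\subset G$.

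The main obstacle is the interface between the two phases. One must check simultaneously that the greedy embedding of $U$ (a) leaves each matching pair super-regular after deletion of the phase-one images, and (b) leaves, for every restricted vertex of $U'$, a target set still large enough to feed into the Blow-up Lemma. Both requirements reduce to controlling the accumulated losses coming from slicing and from intersecting up to $\Delta$ regular neighbourhoods, so the one genuinely delicate point is the quantitative choice of $\varepsilon$ against $d$ and $\Delta$; the structural fact that $H-U$ splits into independent matching pieces is what keeps the rest of the argument routine.
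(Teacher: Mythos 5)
You should first know that the paper does not prove this statement at all: Lemma~\ref{lemma:GeneralEmbedding} is quoted as a black box from \cite{Bo09} and \cite{BoHeTa10}, so there is no in-paper proof to compare yours against. Measured against the argument in those cited sources, your sketch reconstructs essentially the standard proof. The structural observations are correct and are the crux: condition (i) together with the fact that the tree $T$ has no loops makes every $W_i$ independent; $U$ is exactly the set of vertices of $H$ incident to edges crossing $E_T\setminus E_M$; and since $M$ is a matching, an $H$-edge between two clusters covered by different matching edges would cross a non-matching tree edge, so $H-U$ splits into vertex-disjoint bipartite pieces, one per edge of $M$, plus isolated vertices. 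The two-phase plan --- greedy pre-embedding of $U$ using regularity along tree edges, then one application of the Blow-up Lemma \cite{KoSaSz97} per matching edge, with image restrictions for the vertices of $U_i'$, $U_j'$ whose number is controlled by condition (iv) --- is precisely how the cited works proceed.

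Two points in your write-up are looser than they should be, though neither breaks the architecture. First, the claim that the common neighbourhood of up to $\Delta$ embedded images inside $V_i$ has size at least about $(d-\varepsilon)^{\Delta}|V_i|$ is not a consequence of choosing $\varepsilon$ small: for arbitrary image choices it can fail entirely. What makes it true is the greedy bookkeeping itself --- each image must be chosen to be typical for all of its not-yet-embedded neighbours, avoiding at each step the at most $\Delta\varepsilon|V|$ atypical vertices supplied by the definition of $\varepsilon$-regularity together with the already-occupied ones; your candidate-set language is compatible with this, but you present the bound as an assumption rather than as the thing the greedy choice maintains. Second, Fact~\ref{fact:SuperSlicing} does not literally say that a super-regular pair stays super-regular after deleting the phase-one images from both sides; what you need there is the routine (but separate) observation that removing at most $\varepsilon|V_i|$ and $\varepsilon|V_j|$ vertices degrades $(\varepsilon,d)$-super-regularity only to roughly $(2\varepsilon, d-2\varepsilon)$-super-regularity. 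With those two details repaired, your proof is the same as the one the paper outsources to the literature.
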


\subsection{Connected matchings and regular blow-ups}\label{subsec:blowtree}

The main result of this section (see Lemma~\ref{lemma:FindTheNiceTree} below), a key lemma in the proof of Theorem~\ref{thm:threecolour}, shows that in any edge colouring of $K_{N,N}$ there exists a dense monochromatic subgraph $G_1$ of $K_{N,N}$ that is regular on a ``large''  tree $T$ that has some structural properties that allow us to embed $H$ into $G_1$.

A {\em connected matching} in a graph $R$ is a matching $E_M$ with all its edges in the same connected component of $R$.
A powerful technique introduced by {\L}uczak in~\cite{Lu99} reduces some Ramsey problems for cycles and paths to problems about connected matchings (see, e.g., \cite{BuLeSu18+,FiLu07,GyRuSaSz07b,GyRuSaSz07,HaLuPeRoRuSiSk06}).
Here we follow the strategy used in~\cite{MoSaScTa15} that applies the connected matching technique to graphs more general than cycles and paths.
Lemma~\ref{lemma:Matching} below, recently proved by Buci\'c, Letzter and Sudakov~\cite{BuLeSu18+}, is essential in the proof of Lemma~\ref{lemma:FindTheNiceTree}, the main result of this section.
Lemma~\ref{lemma:Matching} will allow us to find a ``large'' connected matching in some $3$-coloured almost complete bipartite reduced graph.

\begin{lemma}\label{lemma:Matching}
For every positive $\eps< 1/(3\cdot10^5)$ there exists a natural number $k_0$ such that the following holds for every $k'\geq k_0$.
Let $R=(A,B;E_R)$ be a bipartite graph with $|A|=|B|=k$, where $k\geq (3+(3\cdot10^5)\eps)k'$, and suppose that every vertex in $A$ has at most $\eps k'$ non-neighbours in $B$ and vice versa.
Then, in every $3$-colouring of $E_R$ there exists a monochromatic connected matching with at least $k'$ edges.
\end{lemma}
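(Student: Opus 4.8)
The plan is to reduce the statement to a structural fact about monochromatic components and then extract the matching through K\"onig's theorem. I would first reformulate the goal: a monochromatic connected matching of size at least $k'$ is exactly a colour $c$ together with a single connected component $D$ of the colour-$c$ graph whose two halves support a matching of size $k'$, which by K\"onig's theorem is equivalent to $D$ having no colour-$c$ vertex cover of size smaller than $k'$. Hence it suffices to produce one monochromatic component that is at once large and sufficiently balanced between $A$ and $B$. The near-completeness hypothesis I would not remove but carry as an error term: since every vertex has at most $\eps k'$ non-neighbours, each neighbourhood and Hall-deficiency estimate below is perturbed by an additive $\eps k'$, and the slack $(3\cdot10^5)\eps k'$ in $k\ge(3+(3\cdot10^5)\eps)k'$ is calibrated to absorb the cumulative loss, so that I may argue as though $R$ were complete bipartite and repair the count at the end.

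The engine of the analysis is the elementary observation that distinct components of a single colour are mutually cross-monochromatic: if $C$ and $C'$ are two different colour-$1$ components, then every edge between $A\cap C$ and $B\cap C'$ avoids colour $1$, so the complete bipartite graph spanned by $C\cup C'$ is, across the two components, coloured using only colours $2$ and $3$. I would fix colour $1$, whose components partition $A$ and $B$ into halves of sizes $(a_i,b_i)$ with $\sum a_i=\sum b_i=k$, and suppose for contradiction that no monochromatic component contains a matching of size $k'$; equivalently, by K\"onig, every monochromatic component admits a colour vertex cover of size smaller than $k'$.

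Applied to colour $1$, this assumption forces each colour-$1$ component to be either small or unbalanced, so colour $1$ cannot concentrate a large balanced core; I would then select two colour-$1$ components whose union spans roughly $2k/3\ge 2k'$ vertices on each side. Across these two components the graph is only $2$-coloured, and the well-understood two-colour bipartite bound---the same phenomenon underlying $R^{\bip}(P_n,P_n)=n$---produces a monochromatic connected matching of size at least $k'$ in colour $2$ or $3$, contradicting the assumption. This balancing is exactly what needs the threshold $k\ge 3k'$, in agreement with the extremal colouring that cuts one side into three monochromatic blocks of size $k'-1$: there each colour forms a single complete bipartite component of matching number exactly $k'-1$, so $k=3(k'-1)$ fails while anything beyond $3k'$ must succeed.

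The step I expect to be the main obstacle is making this dichotomy quantitatively airtight. One must guarantee that in \emph{every} colouring, either some monochromatic component survives as large and balanced after the $\eps k'$ defect corrections---so that K\"onig and Hall yield a genuine matching of size $k'$ rather than a large but unbalanced, double-star-like component---or the colour-$1$ components split in a way that exposes a $2$-coloured complete bipartite region with at least $2k'$ vertices per side. Handling the three colours simultaneously, instead of peeling them off one at a time (which loses too much), and checking that the accumulated deficiencies and rounding never exceed the budgeted $(3\cdot10^5)\eps k'$, is the delicate part. This precise three-colour component analysis is the content of the result of Buci\'c, Letzter and Sudakov cited in the lemma, whose argument I would invoke to settle the balanced case.
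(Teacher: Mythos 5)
The paper contains no proof of Lemma~\ref{lemma:Matching} at all: it is quoted (with a deliberately weakened constant, as the remark following the lemma explains) from Buci\'c, Letzter and Sudakov~\cite{BuLeSu18+} and is used as a black box in the proof of Lemma~\ref{lemma:FindTheNiceTree}. So the only legitimate ``proof'' available to the paper is the citation. Your sketch is not a valid substitute, because its decisive step is circular: after setting up the component analysis, you say the balanced case ``is the content of the result of Buci\'c, Letzter and Sudakov cited in the lemma, whose argument I would invoke'' --- but that result \emph{is} the lemma being proved. A blind proof cannot defer its main case to the statement under proof; once that invocation is struck out, nothing of substance remains established.

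The intermediate steps you claim as your own also fail. From the contradiction hypothesis, K\"onig's theorem gives each colour-$1$ component a vertex cover of size less than $k'$; it does not force the component's vertex span to be small or unbalanced. A colour-$1$ double star can span all of $A\cup B$ while having matching number $2$, so your step ``select two colour-$1$ components whose union spans roughly $2k/3$ vertices on each side'' can be impossible --- colour $1$ may have a single spanning component --- and you mention this double-star obstruction without resolving it. Moreover, even when two large colour-$1$ components $C$ and $C'$ do exist, the region guaranteed to avoid colour $1$ consists only of the two crossing pairs $(A\cap C,\,B\cap C')$ and $(A\cap C',\,B\cap C)$; edges inside $C$ and inside $C'$ may still use colour $1$. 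This ``cross'' is not a near-complete bipartite graph on $2k/3$ vertices per side, so the two-colour bound underlying $R^{\bip}(P_n,P_n)=n$ does not apply to it directly, and establishing that a colour-$2$ or colour-$3$ component stays connected across the two crossing pairs is precisely the delicate three-colour analysis that occupies the cited paper. In short, the reduction contains genuine errors, and the case into which it funnels everything is exactly the cited theorem, so the proposal does not constitute a proof.
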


We remark that the bound $1/(3\cdot10^5)$ in Lemma~\ref{lemma:Matching} is weaker then the bound proved in~\cite{BuLeSu18+}, but we stated it in this form for simplicity.
The following definition plays an important role in our proof.

\begin{definition}\label{def:shape}
Let $G=(V_G,E_G)$ be a graph and consider an edge-colouring $\chi_G$ of $E_G$.
Given positive $\eps$ and $d$, and positive integers $\ell$, $\ell'$ and $k$, we say that $G$ has an \emph{$(\eps,d)$-regular $(\ell,\ell',k)$-cm-shape under $\chi_G$} if there exists a tree $T$ on vertex set $\{x_1,\dots,x_\ell,y_1,\dots,y_\ell,z_1,\dots,z_{\ell'}\}$ containing a matching $E_M=\{x_iy_i\colon i=1,\dots,\ell\}$ with an 
even distance in $T$ between any $x_i$ and $x_j$ for all $1\leq i<j\leq \ell$, and there exists a partition $\{V_0,V_1,\ldots,V_k\}$ of $V_G$ with $|V_1|=\ldots=|V_k|\geq (1-\eps)|V_G|/k$ such that the spanning subgraph of $G$ spanned by one of the colours is $(\varepsilon,d)$-regular on $T$.
\end{definition}

The next lemma is the main result of this section. 
Given a colouring $\chi_{K_{N,N}}\colon E(K_{N,N})\to[3]$, we denote by $G_i$ the spanning subgraph of $K_{N,N}$ containing only edges with colour $i$, for $i\in [3]$.

\begin{lemma}\label{lemma:FindTheNiceTree}
For every $\varepsilon <1/(24\cdot10^5)$ there exists $K_0$ such that for
all $N \geq K_0$ and for every colouring $\chi_{K_{N,N}}\colon E(K_{N,N})\to[3]$, there exist integers $\ell,\ell',k$ with 
$\ell,\ell'\leq k \leq K_0$ and $\ell\geq k/\big(3+(24\cdot10^5 \eps)\big)$ such that $K_{N,N}$ has an $(\eps,1/3)$-regular $(\ell,\ell',2k)$-cm-shape under $\chi_{K_{N,N}}$.
\end{lemma}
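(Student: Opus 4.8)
The plan is to follow Łuczak's strategy: reduce the colouring of $K_{N,N}$ to a colouring of a dense, almost-complete bipartite \emph{reduced} graph via the Regularity Lemma, extract a large monochromatic connected matching there using Lemma~\ref{lemma:Matching}, and finally organise that matching into the tree $T$ demanded by Definition~\ref{def:shape}. First I would fix the parameters. Given $\eps<1/(24\cdot10^5)$, apply Lemma~\ref{lemma:Matching} with its regularity parameter set to $\eps_M:=8\eps$; this is admissible because $8\eps<1/(3\cdot10^5)$, and it produces a threshold $k_0^M$. Then apply the Regularity Lemma (Lemma~\ref{lemma:Regularity}) to the three colour classes $G_1,G_2,G_3$, with parameter $\eps$ and a lower bound $k_0$ on the number of parts chosen large enough in terms of $k_0^M$. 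This yields a partition $V_0,V_1,\dots,V_{2k}$ in which, by (iv) together with $|V_0\cap L|=|V_0\cap R|$ and (ii), exactly $k$ parts lie in $L$ and $k$ in $R$, all of common size at least $(1-\eps)|V_G|/(2k)$; I set the $K_0$ of this lemma to be the output $K_0$ of the Regularity Lemma, so that $2k\le K_0$ and hence $\ell,\ell'\le k\le K_0$ once the size bounds are established.

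Next I would build the reduced bipartite graph $R=(A,B)$ with $|A|=|B|=k$, joining $V_i$ and $V_j$ exactly when their pair is $\eps$-regular in all three colours; by (v) each vertex then has at most $2\eps k$ non-neighbours. Since $V_i\subseteq L$ and $V_j\subseteq R$ are joined completely in $K_{N,N}$, the three densities $d_{G_1},d_{G_2},d_{G_3}$ on such a pair sum to $1$, so at least one is $\ge 1/3$; I colour each edge of $R$ by such a colour. Applying Lemma~\ref{lemma:Matching} with $\eps_M=8\eps$ and $k':=\lfloor k/(3+24\cdot10^5\eps)\rfloor$, one checks that $3+24\cdot10^5\eps<4$ forces both $k\ge(3+(3\cdot10^5)\eps_M)k'$ and the non-neighbour bound $2\eps k\le \eps_M k'$ (equivalently $k\le 4k'$) to hold for all large $k$. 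This produces a monochromatic, say colour-$1$, connected matching $M_0$ with at least $k'$ edges, all lying in a single component $C$ of the colour-$1$ reduced graph $R_1$; every edge of $R_1$, hence of $C$, is an $(\eps,1/3)$-regular pair in $G_1$.

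It remains to convert $C$ into the tree $T$. I would extend $M_0$ to a \emph{maximal} matching $M=\{x_iy_i:i=1,\dots,\ell\}$ of $C$ (this only enlarges it and stays inside $C$, so $\ell\ge k'$), orienting each edge so that $x_i\in A$, $y_i\in B$. A matching is a forest, so it extends to a spanning tree of $C$; repeatedly deleting non-matching leaves yields a subtree $T$ containing $M$ all of whose leaves are matching vertices, and the remaining non-matching vertices $z_1,\dots,z_{\ell'}$ are internal, of degree $\ge 2$. Because $C$ is bipartite with all $x_i\in A$, the path in $T$ between any two $x_i,x_j$ has even length, giving the even-distance condition. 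Since $T\subseteq C\subseteq R_1$ is $(\eps,1/3)$-regular in $G_1$ on $T$ and the partition meets the size requirement, $(\ell,\ell',2k)$ is the desired cm-shape, with $\ell\ge k'\ge k/(3+24\cdot10^5\eps)$ up to the integer rounding, which is harmlessly absorbed by taking $K_0$ large.

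The main obstacle is the bound $\ell'\le k$: it genuinely fails for a careless choice of matching, since a sparse connected matching inside a long induced path forces $\ell'\approx 4k/3$. This is exactly what the \emph{maximality} of $M$ fixes. By maximality no edge of $C$, and hence no edge of $T$, joins two $M$-unmatched vertices, so the $z$-vertices form an independent set in $T$; counting the $2\ell+\ell'-1$ edges of $T$ against the $\sum_z\deg(z)\ge 2\ell'$ edges incident to $z$-vertices gives $\ell'\le 2\ell-1$. Combined with $2\ell+\ell'\le|V(C)|\le 2k$ this yields $\ell'\le k-1<k$, while $\ell\le k$ is immediate from $2\ell\le 2k$. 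The only other delicate point is the bookkeeping of the factor $8$ relating $\eps$ to the regularity parameter $\eps_M$ of Lemma~\ref{lemma:Matching}, which is arranged precisely so that the interval $[k/4,\,k/(3+24\cdot10^5\eps)]$ from which $k'$ is drawn is non-empty.
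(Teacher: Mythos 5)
Your proposal is correct, and it follows essentially the same route as the paper: regularize the three colour classes simultaneously, form the reduced bipartite graph on $k+k$ clusters whose edges are the pairs that are $\eps$-regular in all three colours, give each edge a majority colour (density at least $1/3$), apply Lemma~\ref{lemma:Matching} with parameter $8\eps$ — the same bookkeeping, since $3\cdot 10^5(8\eps)=24\cdot 10^5\eps$ and the non-neighbour condition $2\eps k\le 8\eps k'$ amounts to $k\le 4k'$ — and then convert the monochromatic connected matching into a tree, the even-distance condition coming from the bipartition of the reduced graph (the paper uses a proper $2$-colouring of $T$, which is the same thing).

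The one genuine difference is your treatment of $\ell'\le k$, and here you do more than the paper. The paper simply labels the vertices of $T$ outside the matching as $z_1,\dots,z_{\ell'}$ and never verifies $\ell'\le k$; you are right that this can fail for a carelessly chosen tree, since with $\ell\approx k/3$ the component could force $\ell'$ close to $4k/3$. Your fix — pass to a maximal matching of the component, take a spanning tree containing it, prune non-matching leaves, and use that the unmatched vertices then form an independent set of internal vertices to get $\ell'\le 2\ell-1$, hence $2\ell'\le 2\ell+\ell'-1\le 2k-1$ — is sound and actually repairs a step the paper glosses over. One minor slip: taking $K_0$ large does not remove the loss coming from the floor in $k'=\lfloor k/(3+24\cdot 10^5\eps)\rfloor$, so as written you only conclude $\ell\ge\lfloor k/(3+24\cdot 10^5\eps)\rfloor$ rather than $\ell\ge k/(3+24\cdot 10^5\eps)$; the clean fix is to apply Lemma~\ref{lemma:Matching} with the real number $k'=k/(3+24\cdot 10^5\eps)$ (as the paper does), whose hypotheses you have verified verbatim, so that the matching has at least $k'$, hence at least $\lceil k'\rceil$, edges.
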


\begin{proof}
Fix $\eps<1/(24\cdot10^5)$.
From Lemma~\ref{lemma:Matching} applied with $8\eps$, we obtain $k_0$.
Now let $K_0$ be obtained by an application of the Regularity Lemma (Lemma \ref{lemma:Regularity}) with parameters $\eps$ and $4 k_0$. 
Finally let ${N\geq K_0}$ be given and consider an arbitrary $3$-colouring $\chi_{K_{N,N}}\colon E(K_{N,N})\to[3]$ of $E(K_{N,N})$.

From the Regularity Lemma, we know that there is a partition $\{V_0,V_1,\ldots,V_{2k}\}$ of the vertices of $K_{N,N}$ with $k_0\leq 2k\leq K_0$ such that the following properties hold: $V_0$ intersects both parts of the bipartition of $K_{N,N}$ in the same number of vertices, $|V_1|=\ldots|V_{2k}|=m\geq (1-\varepsilon)N/k$, every $V_i$ ($1\leq i\leq 2k$) is completely contained in one of the parts of the bipartition of $K_{N,N}$, and for every $V_i$ ($1\leq i\leq 2k$), apart from at most $2\varepsilon k$ classes $V_j$, the pair $(V_i,V_j)$ is $\varepsilon$-regular in $G_1$, $G_2$ and $G_3$.

Let $R=(A,B;E_R)$ be the reduced graph with vertex set $[2k]$ such that $ij\in E_R$ if and only if 
$\{V_i,V_j\}$ is $\varepsilon$-regular in each of $G_1$, $G_2$ and $G_3$.
Since every $V_i$ ($1\leq i\leq k$) is completely contained in one of the parts of the bipartition of $K_{N,N}$ and the intersection of each one of these parts with $V_0$ has the same cardinality, $|A|=|B|=k$.

Let $\chi_R\colon E(R)\to[3]$ be a colouring of the edges of $R$ such that $\chi_R(i,j)=s$ if $s\in[3]$ is the 
biggest integer with $|E_{G_s}(V_i,V_j)| \geq |E_{G_r}(V_i,V_j)|$ for $1\leq r\leq 3$,
i.e., the edge $ij$ receives one of the majority colours (considering $\chi_{K_{N,N}}$) in $E_{K_{N,N}}(V_i,V_j)$.

Put $k' = k/(3+(24\cdot 10^5\eps))$.
Note that since $k\geq 4k_0$, we have $k'\geq k_0$.
This together with the facts that $k\geq \big(3+(3\cdot 10^5(8\eps))\big)k'$ and every vertex in $A$ (resp. $B$) has at most $2\eps k\leq (8\eps) k'$ non-neighbours in $B$ (resp. $A$), allow us to use Lemma~\ref{lemma:Matching}, which guarantees the existence of a monochromatic tree $T=(V_T,E_T)$ with $|E_T|=\ell\geq k'$.
Without lost of generality, suppose $T$ is monochromatic in colour~$1$.

Note that we can label $E_M=(\{x_i,y_i\}\colon i=1\ldots\ell)$ such that $x_i$ and $x_j$ are at even distance in $T$ for $1\leq i<j\leq \ell$.
In fact, consider a proper colouring $\chi_T\colon V_T \to [2]$ and label the endpoints of edges in $E_M$ that are in
$\chi_T^{-1}(1)$ with $x_i$ and label the other endpoints with $y_i$.
Since any $x_i$ and $x_j$ are in the same colour class, they are at even distance in $T$.
We give labels $z_1,\ldots,z_{\ell'}$ to the vertices of $T$ that are not covered by $E_M$.

From the colouring of $E_R$ we know that, if $\chi_R(ij)=s$, then $|E_{G_s}(V_i,V_j)|\geq |V_i||V_j|/3$.
Therefore, 
since all the edges of $T$ are present in $R$, the pairs $\{V_i,V_j\}$ (for all $ij\in E_T$) are $\varepsilon$-regular in $G_1$ with $|E_{G_1}(V_i,V_j)|\geq |V_i||V_j|/3$. 

The graph composed of the classes $V_i$ for every $i\in V_T$ with edge set $E_{G_1}(V_i,V_j)$ between every pair has an $(\eps,1/3)$-regular $(\ell,\ell',k)$-cm-shape under $\chi_{K_{N,N}}$. 
\end{proof}

\section{Proof of the main result}\label{sec:main}

We start this section explaining the main ideas of the proof of Theorem~\ref{thm:threecolour}.
Given $\gamma>0$ and a sufficiently large $n$, we consider an arbitrary edge colouring of $K_{N,N}$ with $3$ colours, where $N={(3/2+\gamma)n}$.
Given a balanced graph $H=(V_H,E_H)$ on $n$ vertices, our aim is to show that there is a monochromatic copy of $H$ in $K_{N,N}$.
For this, we will apply the Embedding Lemma (Lemma~\ref{lemma:GeneralEmbedding}) to find a copy of $H$ in a suitable monochromatic subgraph $G$ of $K_{N,N}$.
To obtain this graph $G$ we first use Lemma~\ref{lemma:FindTheNiceTree} to find a monochromatic subgraph $G_T$ of $K_{N,N}$ that admits a partition in dense regular pairs, then we use the slicing results (Facts~ \ref{fact:Slicing} and~ \ref{fact:SuperSlicing}) to delete some vertices of $G_T$, obtaining $G$, which contains a regular partition with sufficiently dense and large regular pairs (that covers $\big(1+o(1)\big)n$ vertices) in a structured way.

To apply the Embedding Lemma successfully we need to prepare the graph $H$, showing that it admits a partition $\mathcal{W}$ compatible with a partition of $G$ in the sense of Definition~\ref{def:compatiblePartitions}.
Since $H$ has small bandwidth and is globally balanced, we use Lemma~\ref{lemma:balancedVectors} to order the classes of $\mathcal{W}$ to obtain some local balancedness.
This allow us to show that $\mathcal{W}$ is compatible with the regular partition of $G$ that we obtained before.
The proof is finished with the application of the Embedding Lemma.
In the next subsection we make this reasoning precise.
We remark that these ideas are similar to the ones used in the proof of~\cite[Theorem~1.3]{MoSaScTa15}.

\subsubsection*{Proof of Theorem \ref{thm:threecolour}}
Fix $\gamma>0$ and $\Delta\geq 1$, and let $\varepsilon_1$ be given by Lemma~\ref{lemma:GeneralEmbedding} applied with $d=1/4$ and $\Delta$.
Now set 
$$
\varepsilon=\min\left\{\frac{\varepsilon_1}{2}, \frac{\gamma/2}{24\cdot 10^5 + 2(3+\gamma/2)}\right\}.
$$
Since $\varepsilon\leq 1/(24\cdot 10^5)$, one can get $K_0$ from Lemma~\ref{lemma:FindTheNiceTree} applied with $\eps$.
Fix $\xi=\gamma/6$ and let $n_0$ be obtained from Lemma~\ref{lemma:balancedVectors} applied with $\xi$ and $K_0$. Set
\begin{equation*}
\beta=\frac{\varepsilon\xi(1+2\xi)}{72\Delta^2K_0^2}.
\end{equation*}
Let $H=(V_H,E_H)$ be an $n$-vertex balanced $(\beta,\Delta)$-graph and put 
\begin{equation*}
N=(3/2+\gamma)n,
\end{equation*}
where $n\geq \max\{n_0,8 K_0/\xi\}$. 
We assume that $n$ is divisible by $K_0$ (note that since $K_0$ is constant, this is not a problem).
To finish this starting preparation, consider an arbitrary $3$-colouring $\chi_{K_{N,N}}\colon E(K_{N,N})\to[3]$ of the edges of $K_{N,N}$.
In what follows we shall prove that the colouring $\chi_{K_{N,N}}$ yields a monochromatic copy of $H$.
For clarity, we split our proof into three parts: i) obtaining a suitable partition of a monochromatic subgraph $G$ of $V(K_{N,N})$; ii) obtaining a suitable partition of $V_H$; iii) application of the Embedding Lemma.

\ \\
\noindent\textit{Regular partition of a monochromatic subgraph $G$ of $V(K_{N,N})$}.
\ \\

We will obtain a well structured sufficiently regular monochromatic subgraph $G$ of $K_{N,N}$.
By Lemma~\ref{lemma:FindTheNiceTree}, we know that there exist integers $\ell,\ell',k$ with ${\ell,\ell'\leq k \leq K_0}$ and $\ell\geq k/(3+24\cdot 10^5\eps)$ such that $K_{N,N}$ has an $(\eps,1/3)$-regular $(\ell,\ell',2k)$-cm-shape under $\chi_{K_{N,N}}$.
This fact means that there exists a tree $T$ with vertices
$$
\{x_1,\dots,x_\ell,y_1,\dots,y_\ell,z_1,\dots,z_{\ell'}\}
$$
containing a matching $E_M=\{ x_i y_i\colon i=1,\ldots,\ell\}$ such that, for $1\leq i<j\leq \ell$, the vertices $x_i$ and $x_j$ are at an even distance in $T$, and there exists a partition $\{V_0,V_1,\ldots,V_k\}$ of ${V(K_{N,N})}$ such that the spanning subgraph of $K_{N,N}$ spanned by one of the colours (say colour 1), is $(\varepsilon,1/3)$-regular on $T$ and $|V_1|=\ldots=|V_k|=m$, where $m\geq (1-\varepsilon)N/k$.

Recall that $G_1$ is the the spanning subgraph of $K_{N,N}$ containing only edges with colour~$1$.
Let $G_T$ be the subgraph of $G_1$ induced by the classes in $\{V_1,\ldots,V_k\}$ that correspond to the vertices of $T$.
Applying Fact~\ref{fact:SuperSlicing} and Fact~\ref{fact:Slicing} in this order one can easily show that $G_T$ contains a subgraph 
\begin{equation*}
	\text{$G=(V_G,E_G)$ with a partition $\{A_1,\dots,A_\ell,B_1,\dots,B_\ell,C_1,\dots,C_{\ell'}\}$ of $V_G$}
\end{equation*}
such that each one of the classes in this partition have size at least $(1-\varepsilon)m$.
Furthermore, $A_1,\dots,A_\ell,B_1,\dots,B_\ell,C_1,\dots,C_{\ell'}$ correspond, respectively, 
to the vertices $x_1,\dots,x_\ell$, $y_1,\dots,y_\ell$, $z_1,\dots,z_{\ell'}$ of $T$.
Moreover, the bipartite graphs induced by $A_i$ and $B_i$ are $(2\varepsilon, 1/3-\varepsilon)$-super-regular.
Also, the bipartite graphs induced by the other pairs are $(2\varepsilon, 1/3-\varepsilon)$-regular.

Recall that ${N=(3/2+\gamma)n}$. Then, from the choice of $\eps$, since $m\geq(1-\varepsilon)N/k$ and $\ell\geq k/(3+24\cdot 10^5\eps)$, we conclude that
\begin{align}\label{eq:limitD}
|A_1|,\dots,|A_\ell|,|B_1|,\dots,|B_\ell|,|C_1|,\dots,|C_{\ell'}|	&\geq	(1-\eps)m\nonumber\\
		&\geq 	\frac{(1-\eps)^2 (3 + 2\gamma)}{3+24\cdot 10^5\eps}\left(\frac{n}{2\ell}\right)\nonumber\\
		&\geq 	\frac{3 + 2\gamma}{3+\gamma/2}\left(\frac{n}{2\ell}\right)\nonumber\\
		&\geq	(1+\gamma/3)\left(\frac{n}{2\ell}\right).
\end{align}

\ \\
\noindent\textit{Suitable partition of $V_H$.}
\ \\

In order to apply the Embedding Lemma we shall obtain a partition of $V_H$ that is compatible with the partition $\{A_1,\dots,A_\ell,B_1,\dots,B_\ell,C_1,\dots,C_{\ell'}\}$ of $V_G$.
This part of our proof is very similar to the corresponding one in~\cite{MoSaScTa15}.

Since $H$ is a balanced $(\beta,\Delta)$-graph, there exists a $2$-colouring ${\chi_H\colon V_H\to[2]}$ such that ${\big||\chi^{-1}(1)|-|\chi^{-1}(2)|\big|\leq \beta|\chi^{-1}(2)|}$ and there is a labelling $w_1,\ldots,w_n$ of $V_H$ such that $|i-j|\leq \beta n$ for every $w_i w_j\in E_H$.

Let $\hat\ell$ 
be the smallest integer dividing $n$ with ${\hat\ell\geq (7 K_0/\xi) + \ell\geq \ell(7/\xi + 1)}$.
Since $n$ is divisible by $K_0$, we know that
\begin{equation}\label{eq:Kohatell}
\hat\ell\leq (7 K_0/\xi) + 2K_0.
\end{equation}
Consider the partition $\mathcal{W}=\{W'_1,\ldots,W'_{\hat\ell}\}$ of $V_H$ with ${|W'_1|=\ldots= |W'_{\hat\ell}|=n/\hat\ell}$ taking the ordering $(w_1,\ldots,w_n)$ into account, i.e., 
$W'_i=w_{(i-1)n/\hat\ell + 1},\ldots,w_{in/\hat\ell}$
for ${i=1,\ldots,\hat\ell}$.
Recall that, given a permutation $\sigma\colon[\hat\ell]\to[\hat\ell]$, we denote by $C_i(\mathcal{W},\sigma,a,b)$ the number of vertices in $W'_{\sigma(a)}\cup W'_{\sigma(a+1)}\cup\dots\cup W'_{\sigma(b)}$ with colour $i$ under $\chi_H$.
Since $\beta\leq 2/\hat\ell$, one can use Lemma \ref{lemma:balancedVectors} to conclude that, for all integers $1\leq a<b\leq \hat\ell$ with $b-a\geq 7/\xi$, there exists a permutation 
$\sigma\colon[\hat\ell]\to[\hat\ell]$ such that
\begin{equation*}
|C_1(\mathcal{W},\sigma,a,b)-{C_2(\mathcal{W},\sigma,a,b)}|\leq \xi {C_2(\mathcal{W},\sigma,a,b)}.
\end{equation*}

Now we will define some families of the classes $W'_{\sigma(1)}, W'_{\sigma(2)},\ldots, W'_{\sigma(\hat\ell)}$.
For $1\leq i\leq \ell$, put ${a_i=(i-1)\hat\ell/\ell + 1}$ and ${b_i=i\hat\ell/\ell}$. 
Consider the families $\mathcal{W}_1,\ldots,\mathcal{W}_\ell$ such that $\mathcal{W}_i=\{W'_{\sigma(a_i)},W'_{\sigma(a_i+1)}, \ldots, W'_{\sigma(b_i)}\}$ for ${i=1,\ldots,\ell}$. We write $C_1(\mathcal{W}_i)$ for 
$C_1(\mathcal{W},\sigma,a_i,b_i)$ and $C_2(\mathcal{W}_i)$ for $C_2(\mathcal{W},\sigma,a_i,b_i)$.
Thus, for ${i=1,\ldots,\ell}$, since $b_i - a_i = \hat\ell/\ell + 1\geq 7/\xi$, we have
\begin{equation}\label{eq:equalcolours}
|C_1(\mathcal{W}_i)-{C_2(\mathcal{W}_i)}|\leq \xi {C_2(\mathcal{W}_i)},
\end{equation}

Recall that $\{x_1,\dots,x_\ell,y_1,\dots,y_\ell,z_1,\dots,z_{\ell'}\}$ are the vertices of the tree $T$.
Furthermore, $T$ contains the matching $E_M =\{ x_i y_i\colon i=1,\ldots,\ell\}$ and the distance between $x_i$ and $x_j$ in $T$ is even for all $i$ and $j$.
The partition of $V_H$ we will obtain is composed of classes $X_1,\dots,X_{\ell}$, $Y_1,\dots,Y_{\ell}$, $Z_1,\dots,Z_{\ell'}$ that correspond, respectively, to the vertices $x_1,\dots,x_\ell$, $y_1,\dots,y_\ell$, $z_1,\dots,z_{\ell'}$, which also correspond, respectively, to the classes $A_1,\ldots,A_\ell,B_1,\ldots,B_\ell$, $C_1,\ldots,C_{\ell'}$.

For every $i=1,\ldots,\ell$, most of the vertices of $\mathcal{W}_i$ will be part of the classes $X_i$ and~$Y_i$, depending on the colour given by $\chi_H$.
We will distribute the leftover vertices to allow a connection between the classes corresponding to matchings edges.

Each one of the classes $W'_i$ will be divided in two parts: the \emph{link} and the \emph{kernel}.
The link of $W'_i$, denoted by $L_i$, is the part that guarantees the connection between $W'_i$ and $W'_{i+1}$ if it is necessary.
For the class $W'_{\hat\ell}$, we set $L_{\hat\ell}=\emptyset$, and 
for $1\leq i\leq \hat\ell-1$, if $W'_i$ and $W'_{i+1}$ are in the same class $\mathcal{W}_r$, then 
$L_i=\emptyset$.
In what follows we define the link $L_i$ of $W'_i$ for $1\leq i\leq \hat\ell-1$ when $W'_i\in \mathcal{W}_r$ and $W'_{i+1}\in \mathcal{W}_{s}$ for $r\neq s$.
Denote by $P_T(r,s)$ the path of the tree $T$ between $x_r$ and $x_s$ and let $P^{\int}_T(r,s)\subset P_T(r,s)$ be the path obtained by excluding the vertices in $\{x_r,y_r,x_s,y_s\}$ from $P_T(r,s)$.
For simplicity, set $t_{r,s}=|P^{\int}_T(r,s)|$.
We divide the ${(t_{r,s}+1)\beta n}$ ``last'' vertices of $W'_i$ in $t_{r,s}+1$ ``pieces'' of size $\beta n$, where the $j$-th piece is denoted by $L_i(j)$ for ${1\leq j\leq t_{r,s}+1}$, that is,
\begin{equation*}
L_i(j)=w_{(i-(t_{r,s}+2-j)\beta \hat\ell)n/\hat\ell+1},\ldots,w_{(i-(t_{r,s}+1-j)\beta \hat\ell)n/\hat\ell}.
\end{equation*}
Finally, we define the link $L_i$ of $W'_i$ as $L_i = \{L_i(1),\ldots,L_i(t_{r,s}),L_i(t_{r,s}+1)\}$.

The kernel of $W'_i$, denoted by $\KE_i$, is the set of the remaining vertices of $W'_i$, i.e., $\KE_i = W'_i \setminus 
L_i$.
All vertices of $\KE_i$ will belong to the matching classes $X_i$ and $Y_i$.

We are now ready to form the partition of $V_H$ that we need.
We start with empty classes $X_1,\dots,X_{\ell},Y_1,\dots,Y_{\ell},Z_1,\dots,Z_{\ell'}$.
Let us first deal with the kernels.
For every $1\leq i\leq \ell$, take each class $W'_p$ of the family $\mathcal{W}_i$, put in $X_i$ all the vertices $w$ of 
the kernel $\KE_p$ with $\chi_H(w)=1$ and put in $Y_i$ all the vertices $w$ of $\KE_p$ with $\chi_H(w)=2$.

Now we will distribute the vertices of the links.
For every ${1\leq i\leq \hat\ell-1}$ such that $W'_i\in \mathcal{W}_r$ and $W'_{i+1}\in \mathcal{W}_{s}$ with $r\neq s$, let $\{u_1,\ldots,u_{t_{r,s}}\}$ be the vertices of $P^{\int}_T(r,s)$.
Furthermore, let $u_0$ and $u_{t_{r,s}+1}$ be, respectively, the vertices of $T$ adjacent to $u_1$ and $u_{t_{r,s}}$ in $P_T(r,s)$.
We will show that is possible to ``walk'' between the matching classes.
The vertex $u_0$ can be either $x_r$ or $y_r$.
Without loss of generality we assume that $u_0=x_r$.
For $1\leq j\leq t_{r,s}+1$, we put the vertices $w$ of $L_i(j)$ with $\chi_H(w)=1$ in the class corresponding to $u_{j-1}$ if $j$ is even, and in the class corresponding to $u_j$ if $j$ is odd.
On the other hand, we put the vertices $w$ with $\chi_H(w)=2$ in the class corresponding to $u_{j}$ if $j$ is even, and in the class corresponding to $u_{j-1}$ if $j$ is odd.
Recall that $x_i$ and $x_j$ are at an even distance in $T$ for all $1\leq i<j\leq \ell$.
Furthermore, every link has size $\beta n$.
Therefore, we know that there is no edges inside the classes and if there is an edge between two
classes, then the corresponding edge belongs to the tree $T$.

\ \\
\noindent\textit{Application of the Embedding Lemma}.
\ \\

In this final part we will apply the Embedding Lemma (Lemma~\ref{lemma:GeneralEmbedding}) to find a copy of $H$ in $G$, which is a monochromatic subgraph of $K_{N,N}$.
For this, we will prove that the partition $\{X_1,\dots,X_{\ell},Y_1,\dots,Y_{\ell},Z_1,\dots,Z_{\ell'}\}$  of $V_H$ is 
$(2\varepsilon_1,T,M)$-compatible with the partition  $\{A_1,\dots,A_\ell,B_1,\dots,B_\ell,C_1,\dots,C_{\ell'}\}$ of $V_G$. 

We start bounding from above the size of each class in
$\{X_1,\dots,X_{\ell},Y_1,\dots,Y_{\ell},Z_1,\dots,Z_{\ell'}\}$.
Since ${C_1(\mathcal{W}_i)+C_2(\mathcal{W}_i) = n/\ell}$ for every $1\leq i\leq \ell$, we can use~\eqref{eq:equalcolours} to notice that, for every $1\leq i\leq \ell$, we have
\begin{equation}
(1-\xi)\frac{n}{2\ell} \leq C_1({\mathcal{W}_i}), C_2({\mathcal{W}_i}) \leq (1+\xi)\frac{n}{2\ell}.
\end{equation}
The choice of $\beta$ and $\eps$ combined with~\eqref{eq:Kohatell} implies that
\begin{equation}\label{eq:4llbeta}
4\ell\hat\ell\beta \leq \frac{(1+\gamma/3)\eps}{2\Delta^2}\leq \xi.
\end{equation}

Let $S_{\min}$ be the set in $\{A_1,\dots,A_\ell,B_1,\dots,B_\ell,C_1,\dots,C_{\ell'}\}$ with minimum cardinality.
Note that, for $1\leq i\leq \ell$, the classes $X_i$ and $Y_i$ are composed, respectively, of vertices $v$ with $\chi(v)=1$  and $\chi(v)=2$.
Since these vertices can come from one kernel and at most two pieces of each link, we can use~\eqref{eq:limitD} and~\eqref{eq:4llbeta} to obtain
\begin{equation}\label{eq:limitXY}
|X_i|,|Y_i| \leq (1+\xi)\frac{n}{2\ell} + 2\hat\ell \beta n
=\left(1+\xi+  4\ell\hat\ell\beta\right) \frac{n}{2\ell}
=(1+2\xi) \frac{n}{2\ell}
\leq|S_{\min}|.
\end{equation}
To bound the size of the classes $Z_i$ from above, for $1\leq i\leq \ell'$, note that they are composed only of vertices in at most two pieces of each link.
Then, from the choice of $\beta$ and $\hat\ell$, and using~\eqref{eq:limitD} and~\eqref{eq:4llbeta}, we have
\begin{equation}\label{eq:limitZ}
|Z_i| \leq 2\hat\ell\beta n
=(4\ell\hat\ell\beta)\frac{n}{2\ell}
\leq \frac{\varepsilon}{2\Delta^2}|S_{\min}|.
\end{equation}

We are ready to check that $\{X_1,\dots,X_{\ell},Y_1,\dots,Y_{\ell},Z_1,\dots,Z_{\ell'}\}$ and $\{A_1,\dots,A_\ell,B_1,\dots,B_\ell$, $C_1,\dots,C_{\ell'}\}$ are $(2\varepsilon_1,T,M)$-compatible.
Following the setup of Definition~\ref{def:compatiblePartitions}, define sets $U_i$ and $U'_i$ for $1\leq i\leq 2\ell+\ell'$ with respect to the partition $\{X_1,\dots,X_{\ell},Y_1,\dots,Y_{\ell},Z_1,\dots,Z_{\ell'}\}$, and let $W_i$ be as follows, for $1\leq i\leq 2\ell+\ell'$:
\begin{equation*}
		W_i=
		\begin{cases}
			X_i, 		&\text{if $1\leq i\leq \ell$},\\
			Y_{i-\ell} 	&\text{if $\ell+1\leq i\leq 2\ell$},\\
			Z_{i-2\ell}	&\text{if $2\ell+1\leq i\leq 2\ell+\ell'$}.
		\end{cases}
\end{equation*}
It is left to verify that the conditions of Definition \ref{def:compatiblePartitions} hold:

\begin{itemize}
 \item [(i)] From the construction of the partition $\{X_1,\dots,X_{\ell},Y_1,\dots,Y_{\ell},Z_1,\dots,Z_{\ell'}\}$ of $V_H$, if there is an edge between two classes, then the corresponding edge belongs to the tree~$T$.

\item [(ii)] The validity of this condition follows directly from~\eqref{eq:limitXY} and~\eqref{eq:limitZ}.

\item [(iii)] Fix $1\leq i\leq 2\ell+\ell'$. 
By definition, the set $U_i$ is composed of the vertices of $W_i$ with neighbours in some $W_j$ 
with $i\neq j$ such that $ij$ is not an edge of the matching $E_M$.
We consider two cases depending on the value of $i$:
\begin{itemize}
\item [(a)]$2\ell+1\leq i\leq 2\ell+\ell'$: Here, $U_i=Z_{i-2\ell}$. From~\eqref{eq:limitZ}, we have $|U_i|\leq \varepsilon 
|S_{\min}|/\Delta$.
\item [(b)]$1\leq i\leq 2\ell$: In this case, $U_i$ is either $X_i$ (if $1\leq i\leq \ell$) or $Y_{i-\ell}$ (if $\ell+1\leq i\leq 2\ell$).
From the construction of the partition $\{X_1,\dots,X_{\ell},Y_1,\dots,Y_{\ell},Z_1,\dots,Z_{\ell'}\}$, we know that the classes $X_i$ and $Y_{i-\ell}$ have neighbours in no more than two of the classes in $\{Z_1,\ldots,Z_{\ell'}\}$.
Then, since $\Delta$ is the maximum degree of $H$, we conclude from~\eqref{eq:limitZ} that 
$$
|U_i|\leq 2\Delta  \left(\frac{\varepsilon |S_{\min}|}{2\Delta^2}\right) = \frac{\varepsilon}{\Delta} |S_{\min}|.
$$
\end{itemize}
Therefore, for every $i=1,\ldots,2\ell+\ell'$ we have
\begin{equation}\label{eq:UjDmin}
|U_i|\leq \frac{\varepsilon}{\Delta} |S_{\min}|,
\end{equation}
which verifies condition (iii). 

\item [(iv)] Recall from Definition~\ref{def:compatiblePartitions} that $U=\bigcup_{i=1}^{2\ell+\ell'}U_i$ and $U'_i=N_H(U)\cap (W_i\setminus U)$, i.e., $U_i'$ is the subset of $W_i$ composed of the neighbours of vertices in $U$ that are not in $U$.
Again, we consider some cases depending on the value of $i$.
\begin{itemize}
\item[(a)] $1\leq i\leq \ell$: In this case, $U'_i\subset W_i= X_{i}$.
From the definition of $U$, vertices of $X_{i}$ that have neighbours in $Z_1\cup\ldots\cup Z_{\ell'}$ belong to $U$.
Since $U'_i\subset X_{i}$ and $U'_i$ contains no vertex from $U$, we conclude that $U'_i$ is composed only of neighbours of $U_{i}\subset Y_{i}$.
Therefore, from~\eqref{eq:UjDmin}, we have ${|U'_i|\leq \Delta|U_{i}|\leq \varepsilon |S_{\min}|}$.
\item [(b)]$\ell+1\leq i\leq 2\ell$: In this case, $U'_i\subset W_i= Y_{i-\ell}$.
From the definition of $U$, vertices of $Y_{i-\ell}$ that have neighbours in $Z_1\cup\ldots\cup Z_{\ell'}$ belong to $U$.
Since $U'_i\subset Y_{i-\ell}$ and $U'_i$ contains no vertex from $U$, we conclude that $U'_i$ is composed only of neighbours of $U_{i-\ell}\subset X_{i-\ell}$.
Therefore, from~\eqref{eq:UjDmin}, we have ${|U'_i|\leq \Delta|U_{i-\ell}|\leq \varepsilon |S_{\min}|}$.
\item [(c)]$2\ell+1\leq i\leq 2\ell+\ell'$: In this case $U_i=Z_{i-2\ell}$.
Then, $U'_i=\emptyset$.
Therefore, clearly we have $|U'_i|\leq \eps|S_{\min}|$.

\end{itemize}
\end{itemize}

We just proved that the partition $\{X_1,\dots,X_{\ell},Y_1,\dots,Y_{\ell},Z_1,\dots,Z_{\ell'}\}$ of
$V_H$ is $(2\varepsilon,T,M)$-compatible with the partition
$\{A_1,\dots,A_\ell,B_1,\dots,B_\ell,C_1,\dots,C_{\ell'}\}$ of $V_G$.
Since $\eps\leq \eps_1/2$, this fact implies $(\varepsilon_1,T,M)$-compatibility.
Therefore, Lemma~\ref{lemma:GeneralEmbedding} guarantees that $H\subset G$, which concludes the proof.
\qed

\bibliography{minhabib}

\end{document}